\documentclass[11pt]{article}
\usepackage{multicol}
\usepackage{amsmath,amssymb,amsthm,array,booktabs,mathtools,tabularray}
\usepackage{mathpazo}
\usepackage[vmargin=2.7cm, hmargin=2.8cm]{geometry}
\usepackage[justification=centering]{caption}
\newcolumntype{C}[1]{>{\centering\arraybackslash}p{#1}}
\theoremstyle{definition}
\newtheorem{thm}{Theorem}
\newtheorem{defn}{Definition}
\newtheorem{corr}{Corollary}
\newtheorem{lemma}{Lemma}
\newtheorem{prop}{Proposition}
\newtheorem{conj}{Conjecture}

\newtheorem{remark}{Remark}
\newcommand{\stern}[2]{S_{#1}(#2)}
\renewcommand{\Re}{\operatorname{Re}}
\renewcommand{\Im}{\operatorname{Im}}
\newcommand{\lm}{\lambda}
\newcommand{\ball}[2]{B_{#2}(#1)}
\newcommand{\cball}[2]{B_{#2}[#1]}
\usepackage{chngcntr}
\counterwithin*{equation}{section}

\newcommand{\C}{\mathbb C}
\newcommand{\Q}{\mathbb Q}
\newcommand{\Z}{\mathbb Z}
\newcommand{\N}{\mathbb N}
\newcommand{\R}{\mathbb R}
\newcommand{\calA}{\mathcal A}
\newcommand{\calB}{\mathcal B}

\newcommand{\calE}{\mathcal E}
\newcommand{\calO}{\mathcal O}
\newcommand{\calS}{\mathcal S}

\newcommand{\abs}[1]{\left|#1\right|}
\newcommand{\eps}{\varepsilon}

\title{Zeros of Stern polynomials in the complex plane}
\author{David Altizio}
\begin{document}

\maketitle

\renewcommand{\baselinestretch}{1.05}
\begin{abstract}
	The classical Stern sequence of positive integers was extended to a polynomial sequence $\stern n\lm$ by Klav\v{z}ar et. al. by defining $\stern 0 \lm = 0$, $\stern 1 \lm = 1$, and 
	\[
	\stern{2n}\lm = \lm \stern n\lm,\quad \stern{2n+1}\lm = \stern n\lm + \stern{n+1}\lm.
	\]
	Dilcher et. al. conjectured that all roots of $\stern n\lm$ lie in the half-plane $\{\Re w < 1\}$.  We make partial progress on this conjecture by proving that $\{|w-2| \leq 1\}\subseteq\C$ does not contain any roots of $\stern n\lm$.  Our proof uses the Parabola Theorem for convergence of complex continued fractions.  As a corollary, we establish a conjecture of Ulas and Ulas by showing that $\stern p\lm$ is irreducible in $\Z[\lm]$ whenever $p$ is a positive prime.
\end{abstract}

\setlength{\parskip}{0pt}
\tableofcontents

\setlength{\parskip}{8pt}
\renewcommand{\baselinestretch}{1.1}

\section{Introduction}

The Stern sequence $(s_n)_{n\geq 0}$ of positive integers, named after Moritz Abraham Stern, is given by $s_0 = 0$, $s_1 = 1$, and 
\begin{equation}\label{eqn:stern-seq}
	s_{2n} = s_n,\quad s_{2n+1} = s_n + s_{n+1}.
\end{equation}
There is a vast body of literature (e.g. \cite{Coons2014,Northshield2010,Reznick2008,Stanley2019} and the references therein) about the Stern sequence.  Perhaps the most striking property of the Stern sequence is that every positive rational number appears exactly once in the sequence $(s_{n+1}/s_n)_{n\geq 1}$, giving an explicit enumeration of $\mathbb Q^+$.  Stern proved this fact 15 years before Cantor introduced the notion of a countable set!

\par In \cite{Klavzar2007}, Klav\v{z}ar, Milutinovi\'{c}, and Petr define a polynomial analogue $\stern n\lm$ of the Stern sequence via $\stern 0\lm = 0$, $\stern 1\lm = 1$, and 
\begin{equation}\label{eqn:stern-def}
	\stern{2n}\lm = \lm \stern n\lm,\quad \stern{2n+1}\lm = \stern n\lm + \stern{n+1}\lm.
\end{equation}
The first $16$ terms of the sequence $\stern n\lm$ are given in Table \ref{table:stern-16}.  This table suggests many patterns, several of which are recorded in Proposition \ref{prop:basic-patterns}.

\begin{table}[ht]
	\[
\def\arraystretch{1.2}
\begin{array}{c|l||c|l}
	n & \stern n\lambda & n & \stern n\lambda  \\ \hline
	1 & 1 & 9 & 1 + 2\lambda + \lambda^2 \\
	2 & \lambda  & 10 & \lambda + 2 \lambda ^2  \\
	3 & 1+\lambda & 11 & 1 + 3\lambda + \lambda ^2 \\
	4 & \lambda ^2 & 12 & \lambda ^2 + \lambda ^3 \\
	5 & 1+ 2 \lambda  & 13 & 1 + 2\lambda + 2 \lambda ^2 \\
	6 & \lambda+\lambda^2  & 14 & \lambda+\lambda ^2+\lambda^3  \\
	7 & 1 + \lambda+\lambda^2 & 15 & 1 + \lambda +\lambda ^2+\lambda^3 \\
	8 & \lambda ^3 & 16 & \lambda ^4 \\
\end{array}
\]
	\caption{\small The first $16$ terms of the sequence $\stern n\lm$.}
	\label{table:stern-16}
\end{table}

While there are several papers which analyze the sequence $\stern n\lm$ (see \cite{Dilcher2017,Dilcher2018,Gawron2014,Ulas2011} and the references therein), not much is known about the zeros of Stern polynomials.  Because $\stern{2n}\lm = \lm \stern n \lm$ for all $n\geq 1$, it suffices to examine the roots of $\stern n\lm$ only when $n$ is odd.  Define
\[
\calS\coloneqq \{z\in\C: \stern n z = 0\text{ for some odd }n\geq 1\}.
\]
In \cite[Theorem 2.1]{Gawron2014}, Gawron proves that $0$, ${-}1$, ${-}\tfrac12$, and $-\tfrac13$ are the only rational zeros of any Stern polynomial by showing, more generally, that any real number in the closed interval $[-\tfrac14,\tfrac14]$ cannot be in $\calS$.  His methods extend easily to show that any complex number $z$ with $|z| \leq \tfrac14$ cannot be in $\calS$.  For completeness, we record the proof here.

\begin{thm}\label{thm:gawron-generalization}
	If $z\in \calS$, then $|z| > \tfrac 14$.
\end{thm}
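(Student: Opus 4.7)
The approach is to prove the contrapositive by strong induction on $n$: for $0<|z|\leq 1/4$, every $S_n(z)$ is nonzero. The degenerate case $z=0$ is settled by the easy observation that $S_n(0)=1$ for odd $n$, so $0\notin\calS$. Throughout I fix $0<|z|\leq 1/4$.

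A bare ``$S_n(z)\neq 0$'' cannot close the induction, because when $n=2m+1$ the recurrence $S_n(z)=S_m(z)+S_{m+1}(z)$ requires ruling out cancellation between $S_m(z)$ and $-S_{m+1}(z)$.  I would therefore strengthen the inductive claim to the ratio bound
\[
\bigl|S_{n+1}(z)/S_n(z)\bigr|\leq 2|z|\ \text{for } n \text{ odd}, \qquad \bigl|S_n(z)/S_{n+1}(z)\bigr|\leq 2|z|\ \text{for } n \text{ even},
\]
which at the boundary $|z|=1/4$ is the clean constant $\tfrac12$.  The base case $n=1$ is just $|S_2(z)/S_1(z)|=|z|\leq 2|z|$, as required.

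For the inductive step at $n\geq 2$, split into four cases by $n\bmod 4$, which simultaneously records the parities of $n$ and of $m\coloneqq\lfloor n/2\rfloor$.  In each case the Stern recurrences express $S_n(z)$ and $S_{n+1}(z)$ as prescribed combinations of $S_m(z)$ and $S_{m+1}(z)$; the reverse triangle inequality applied with the inductive bound for $m$ gives
\[
|S_m(z)+S_{m+1}(z)|\geq (1-2|z|)\max\bigl(|S_m(z)|,|S_{m+1}(z)|\bigr)>0,
\]
and combining this with the extra factor of $|z|$ supplied by the even-step recurrence $S_{2k}(z)=zS_k(z)$ produces a new ratio bound of $|z|/(1-2|z|)$ at index $n$.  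The algebraic inequality $|z|/(1-2|z|)\leq 2|z|$ is equivalent to $|z|\leq 1/4$, so the invariant is preserved exactly in the stated regime; in particular $S_n(z)\neq 0$.

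The main obstacle is identifying the correct invariant.  The naive guess that the Stern recurrence preserves $|S_{n+1}/S_n|\leq |z|$ for odd $n$ is false: already at $n=5$, $z=-1/4$ one has $|S_6/S_5|=3/8>|z|$, so some amplification must be tolerated.  The bound by $2|z|$ is what the recursion actually preserves, and $|z|\leq 1/4$ is precisely the largest modulus for which this invariant survives one step of the induction.  Once the invariant is pinpointed, the four sub-cases reduce to routine applications of the triangle inequality.
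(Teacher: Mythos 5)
Your proof is correct and is essentially the same as the paper's: a strong induction on $n$ with a strengthened invariant that prevents cancellation in $S_m + S_{m+1}$, using the reverse triangle inequality and the extra factor of $|z|$ from the even-index recursion. The paper's invariant is $|\stern{2n+1}z| > \tfrac12\max\{|\stern nz|,|\stern{n+1}z|\}$, which is logically interchangeable with your ratio bound (your $2|z|$ equals the paper's $\tfrac12$ at $|z|=\tfrac14$, and your proof of the step derives exactly the paper's lower bound $|S_m+S_{m+1}| \geq (1-2|z|)\max$); your four cases mod $4$ are just the paper's two parity cases for $m$, combined with the two parities of $n$.
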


\begin{proof}
	Let $z$ be any complex number with $|z| \leq \tfrac 14$.  Let $b_n\coloneqq |\stern n z|$ for each positive integer $n$.  We show more strongly that
	\begin{equation}\label{eqn:stronger-statement}
		b_{2n+1} > \tfrac12\max\{b_n,b_{n+1}\} > 0
	\end{equation}
	for all $n\geq 1$.
	
	\par The proof of \eqref{eqn:stronger-statement} proceeds by induction on $n$.  The base case, $n = 1$, follows because
	\[
	b_3 = |z + 1| \geq \tfrac 34 > \tfrac 12 
	= \tfrac 12\max\{1, |z|\} = \tfrac 12\max\{b_1,b_2\}.
	\]
	
	\par There are two cases to consider.  First assume $n=2k$ is even.  Then
	\begin{align*}
		b_{4k+1} &= |\stern{4k+1}z| = |\stern{2k+1} z + \stern{2k} z|\\
		&= |z \stern kz + \stern{2k+1} z| \geq b_{2k+1} - \tfrac14 b_k\\
		&\geq b_{2k+1} - \tfrac12 b_{2k+1} = \tfrac12 b_{2k+1}.
	\end{align*}
	Moreover, $b_{2k+1} \geq \frac12 b_k$, and thus $\max\{b_{2k},b_{2k+1}\} = b_{2k+1} > 0$.  In this case, our inequality is proved.
	
	\par Now assume $n = 2k + 1$ is odd.  Then
	\begin{align*}
		b_{4k+3} &= |\stern{4k+3}z| = |\stern{2k+1} z + \stern{2k+2} z|\\
		&= |\stern{2k+1} z + z\stern{k+1} z| \geq b_{2k+1} - \frac 14 b_{k+1}\\
		&\geq b_{2k+1} - \tfrac12 b_{2k+1} = \tfrac12 b_{2k+1}.
	\end{align*}
	Moreover, $b_{2k+1} \geq \frac12 b_{k+1}$, and so in this case we also have $\max\{b_{2k},b_{2k+1}\} = b_{2k+1} > 0$.  We have exhausted both cases, completing the proof of Theorem \ref{thm:gawron-generalization}.
\end{proof}

In \cite{Dilcher2017}, Dilcher et. al. focus more specifically on the complex roots of $\stern n\lm$.  Their paper makes the following conjecture.

\begin{conj}\label{conj:location-roots}
	All elements of $\calS$ lie in the half-plane $\{\Re w < 1\}$. 
\end{conj}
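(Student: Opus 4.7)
The plan is to recast the conjecture as a statement about iterated Möbius transformations and then apply tools from the theory of complex continued fractions, most notably the Parabola Theorem. Set $u_n(\lm) := \stern n\lm/\stern{n+1}\lm$, viewed as a rational function of $\lm$. The recursion \eqref{eqn:stern-def} is equivalent to
\[
u_{2n}(\lm) = \frac{\lm\, u_n(\lm)}{u_n(\lm)+1}, \qquad u_{2n+1}(\lm) = \frac{u_n(\lm)+1}{\lm}, \qquad u_1(\lm) = 1/\lm.
\]
A strong induction on $n$ using $\stern{2n}\lm = \lm \stern n\lm$ reduces the conjecture to showing that $\stern n\lm \neq 0$ whenever $\Re\lm \geq 1$ and $n$ is odd; writing $n = 2m+1$, this amounts to showing $u_m(\lm) \neq -1$. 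Thus, modulo Theorem \ref{thm:gawron-generalization}, Conjecture \ref{conj:location-roots} is equivalent to the assertion that $-1$ never appears in the orbit $\{u_m(\lm) : m \geq 0\}$ whenever $\Re\lm \geq 1$.

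The next step is to exhibit an invariant region $D \subset \C$ with $-1 \notin \overline{D}$ such that (i) $u_1(\lm) = 1/\lm \in D$ whenever $\Re\lm \geq 1$ (which holds for, e.g., the closed disk $\{|w-1/2| \leq 1/2\}$), and (ii) both Möbius maps $u \mapsto \lm u/(u+1)$ and $u \mapsto (u+1)/\lm$ send $D$ into itself for all such $\lm$. In the language of continued fractions, $u_n$ admits a finite continued-fraction expansion in $\lm$ determined by the binary digits of $n$, and requirement (ii) is precisely the hypothesis of the Parabola Theorem (of Scott and Wall): it guarantees that all convergents of a complex continued fraction remain in a prescribed parabolic region provided the partial numerators do. Once such a $D$ is produced, induction on the binary length of $n$ completes the argument.

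The principal difficulty is that the Parabola Theorem provides invariant regions which are parabolic or disk-shaped, whereas the conjecture demands we handle the \emph{full} half-plane $\{\Re \lm \geq 1\}$, unbounded in the imaginary direction. For $\lm$ with $\Re\lm \geq 1$ but $|\Im\lm|$ large, the map $u \mapsto \lm u/(u+1)$ stretches any candidate region $D$ substantially, making it implausible that a single $D$ can be chosen uniformly over the whole half-plane. Consequently, I expect the Parabola Theorem approach to yield only a partial result, most naturally a disk around a convenient real point such as $\lm = 2$ --- consistent with the disk $\{|w-2| \leq 1\}$ announced in the abstract. A complete proof of Conjecture \ref{conj:location-roots} will presumably require a supplementary ingredient --- perhaps explicit lower bounds on $|\stern n\lm|$ derived from the polynomial coefficient structure --- to cover the portion of the half-plane with large $|\Im\lm|$ that falls outside the reach of a Parabola-Theorem argument.
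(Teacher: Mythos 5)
The statement you were asked about is Conjecture~\ref{conj:location-roots}, which the paper explicitly leaves \emph{open}: the authors only prove the weaker Theorem~\ref{thm:roots-disk} (no Stern zeros in the disk $\cball 2 1$). You correctly recognized this, and your proposal is not a proof but an honest analysis of why a full proof is out of reach via the Parabola Theorem; that is the right conclusion.

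Your Möbius formulation is accurate: the recursions for $u_n = \stern n\lm/\stern{n+1}\lm$ are correct, the reduction to ``$-1$ never appears in the orbit $\{u_m(\lm)\}$'' is sound, and the observation that $1/\lm$ lies in $\cball{1/2}{1/2}$ when $\Re\lm\geq 1$ is right. The paper's actual route differs from yours in one structural respect: rather than iterating the two Möbius maps digit-by-digit in the binary expansion of $n$, it first compresses runs of like binary digits into the run-length encoding $[[a_1,\ldots,a_t]]$ (Theorem~\ref{thm:stern-poly-recursion}, Corollary~\ref{thm:stern-cont-frac}), producing a continued fraction with coefficients $(\lm)_{a_i}$ and $\lm^{a_i}$, then normalizes to $\boldsymbol K(c_n\mid 1)$ with $c_n = z^{a_n}/\bigl((z)_{a_n}(z)_{a_{n+1}}\bigr)$ and shows $c_n\in E_{\pi/12}$ for $z\in\calB$. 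That compression is what makes the estimates tractable: the hard work is in lower-bounding $|(z)_n|$ (Theorems~\ref{thm:mitrinovic}, \ref{thm:MO-ineq}, \ref{thm:min-geom-series}), which has no natural analogue in the uncompressed digit-by-digit iteration. Your diagnosis of the obstruction --- that for $\Re\lm\geq 1$ with $|\Im\lm|$ large the set of continued-fraction elements escapes every parabola $E_\alpha$ --- matches the paper's own closing remark (see the discussion around Figure~\ref{fig:near-miss}).

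In short: no gap to report, because you did not claim a proof. Your writeup is an essentially correct road map to the paper's partial result, framed with the binary-digit orbit instead of the run-length continued fraction, and you correctly identified why the method cannot reach the full half-plane.
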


By generalizing the Enestrom-Kakeya theorem, they prove Conjecture \ref{conj:location-roots} for several classes of positive integers $n$ taking the form $2^n\pm k$, where $k$ is fixed and $2^n \geq k$.  These are the only two papers the author could find which discuss the complex zeros of $\stern n\lm$.

\par Figure \ref{fig:stern-roots} shows a snapshot of $\calS$.  One striking feature of this figure is the contrasting behavior of these roots within the half-planes $\{\Re w\geq 0\}$ and $\{\Re w < 0\}$.  These differences present difficulties in fully characterizing the geometry of $\calS$.

\begin{figure}[ht]
	\centering
	\captionsetup{width=0.75\linewidth}
	\includegraphics[scale=0.75]{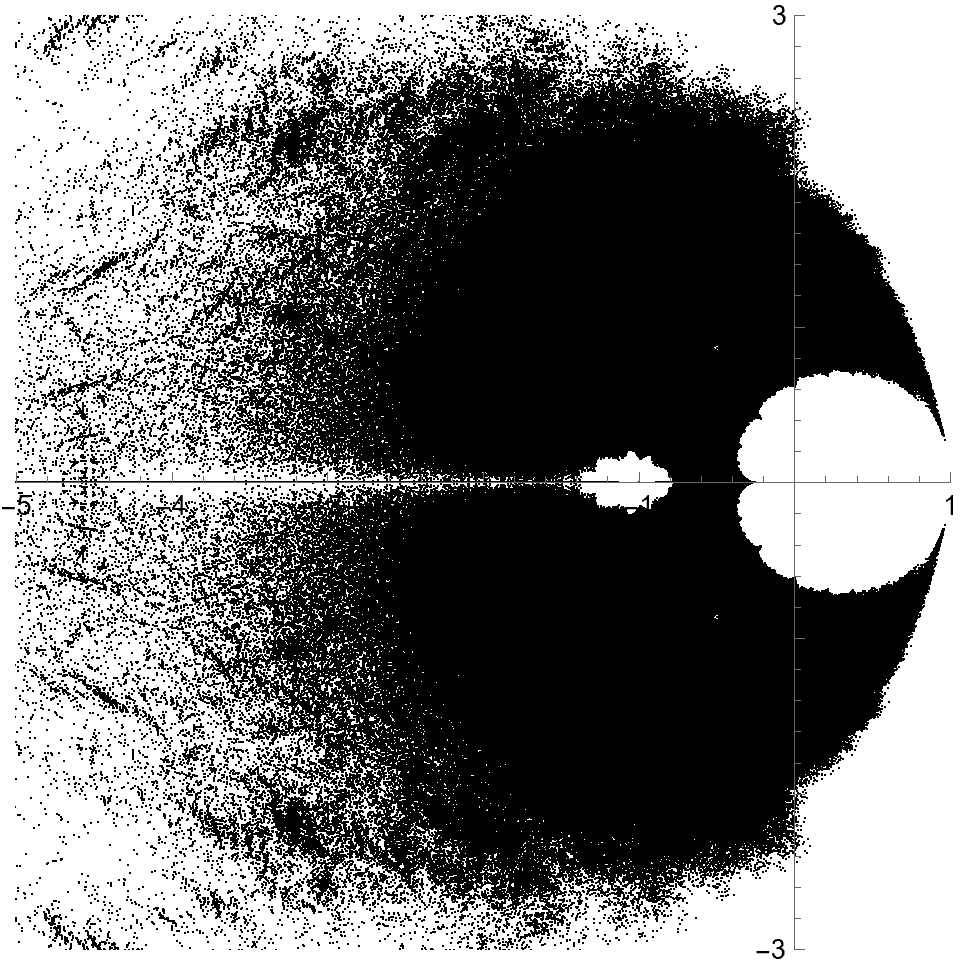}
	\caption{\small The zeros of $\stern n\lm$ in the range $\{a+bi:-4\leq a\leq 1, |b|\leq 3\}$, where $1\leq n<2^{21}$ is odd.}
	\label{fig:stern-roots}
\end{figure}

In this paper, we partially resolve Conjecture \ref{conj:location-roots} by establishing the following result.

\begin{thm}\label{thm:roots-disk}
	All roots of $\stern n\lm$ lie outside the disk $\{\abs{z-2}\leq 1\}\subseteq\C$.
\end{thm}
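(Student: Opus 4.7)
Define the ratio $r_n(z) := S_n(z)/S_{n+1}(z)$. The Stern recursion translates into
\[
r_{2m}(z) = \frac{z\,r_m(z)}{1 + r_m(z)}, \qquad r_{2m+1}(z) = \frac{1 + r_m(z)}{z},
\]
with $r_0(z) = 0$ and $r_1(z) = 1/z$. Since $\stern{2n}\lm = \lm\stern n\lm$, the theorem reduces to showing that $r_n(z)$ is a finite, nonzero complex number for every $n \geq 0$ and every $z$ with $|z - 2| \leq 1$; equivalently, that $\stern{n+1}z \neq 0$. A key observation is that both recursions can be rewritten via the M\"obius map $\phi_z(w) := z/(1 + w)$: one has $r_{2m}(z) = \phi_z(1/r_m(z))$ and $1/r_{2m+1}(z) = \phi_z(r_m(z))$.

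The first step is to verify that the closed disk $\{|z - 2| \leq 1\}$ is contained in the classical parabolic region $P_0 := \{w \in \C : |w| \leq \Re w + \tfrac12\}$ from the Parabola Theorem. Writing $z = p + iq$ with $(p - 2)^2 + q^2 \leq 1$, this amounts to checking $q^2 \leq p + \tfrac14$, which follows from the elementary inequality $1 - (p-2)^2 \leq p + \tfrac14$ (equivalently $p^2 - 3p + \tfrac{13}{4} \geq 0$, whose discriminant is negative). With $z \in P_0$, the Parabola Theorem guarantees that $\phi_z$ maps the half-plane $V_0 := \{w : \Re w \geq -\tfrac12\}$ into itself.

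I then plan to show by strong induction that both $r_n(z) \in V_0$ and $1/r_n(z) \in V_0$ for every $n \geq 1$; equivalently, $r_n(z)$ lies in $R := V_0 \cap \{w : |w + 1| \geq 1\}$. Since $\infty \notin R$ and $-1 \notin R$, this suffices to finish the argument. The base case $n = 1$ is immediate: $r_1 = 1/z$ lies in the disk $\{|w - \tfrac23| \leq \tfrac13\}$, which sits inside $R$. For the inductive step, the Parabola Theorem directly yields two of the four needed inclusions, namely $r_{2m+1} = \phi_z(1/r_m) \in V_0$ and $1/r_{2m} = \phi_z(r_m) \in V_0$. To obtain the remaining two, the plan is to leverage the product identity
\[
r_{2m}(z)\,r_{2m+1}(z) = r_m(z),
\]
a two-line consequence of the recursion, in concert with a sharper geometric analysis of how $\phi_z$ acts on $R$ itself rather than merely on $V_0$.

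The hardest step, I expect, will be this last one: proving that $\phi_z$ preserves $R$, not merely $V_0$. Naive estimates break down near the curved part of $\partial R$ where $|w + 1| = 1$, and the product identity appears essential for coupling the two halves of the inductive hypothesis. The sharpness of the constraint $|z - 2| \leq 1$---it is precisely the largest closed disk inscribed in $P_0$ and tangent to the real axis at $z = 1$---strongly suggests that this step consumes the full Parabola Theorem budget and that no simpler inductive invariant will do.
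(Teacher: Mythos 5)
Your proposal takes a genuinely different route from the paper.  Where the paper invokes the Schinzel--Reznick continued fraction (Corollary~\ref{thm:stern-cont-frac}), normalizes so the elements become $z_{a,b}=z^a/((z)_a(z)_b)$, and then applies the Parabola Theorem with $\alpha=\pi/12$ to show $\calA_z\subseteq E_{\pi/12}$, you iterate the raw Stern recursion through $r_n=\stern n z/\stern{n+1}z$ and use only $\alpha=0$ applied directly to the element $z$ itself.  The recursion formulas, the identity $r_{2m}r_{2m+1}=r_m$, the M\"obius reformulation $r_{2m}=\phi_z(1/r_m)$, $1/r_{2m+1}=\phi_z(r_m)$, and the inclusion $\cball 2 1\subseteq E_0$ are all correct (though you have the indices transposed when you write ``$r_{2m+1}=\phi_z(1/r_m)$ and $1/r_{2m}=\phi_z(r_m)$'' --- it should be $r_{2m}$ and $1/r_{2m+1}$, respectively).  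If the approach could be completed, it would be considerably cleaner than the paper's, bypassing all of Section~4.

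The gap you flag at the end is, however, fatal as stated and not a matter of tightening estimates.  Your inductive step requires that $\phi_z$ carry $R=V_0\cap\{\abs{w+1}\ge 1\}$ into itself, but this is false for $z$ in the disk.  Take $z=2-i\in\cball 2 1$ and $w=-\tfrac12+4i$.  Then $\Re w=-\tfrac12$ and $\abs{w+1}^2=\tfrac14+16>1$, so $w\in R$; but
\[
\frac{1}{\phi_z(w)}=\frac{1+w}{z}=\frac{\tfrac12+4i}{2-i}=\frac{(\tfrac12+4i)(2+i)}{5}=\frac{-3+\tfrac{17}{2}i}{5},
\]
whose real part is $-\tfrac35<-\tfrac12$, so $\phi_z(w)\notin R$.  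Thus the invariant does not close under a single application of the Parabola Theorem, and the sentence ``proving that $\phi_z$ preserves $R$'' describes a false statement.  Your appeal to the product identity $r_{2m}r_{2m+1}=r_m$ is an interesting idea for coupling the two missing inclusions, but you never show how it rules out the failure mode above, and nothing in the Parabola Theorem controls products of points in $V_0$.  (A minor additional issue: $\cball 21$ is not the largest disk inscribed in $P_0$ and tangent at $z=1$; the largest such disk has radius $\tfrac{1+\sqrt 5}{2}$, so the ``Parabola-budget-is-saturated'' intuition in your last paragraph is not the right explanation for the difficulty.)  To salvage this strategy you would need an invariant region $R'$ that is closed under both $w\mapsto 1/w$ and $w\mapsto\phi_z(w)$ for all $z\in\cball 21$, and $R=V_0\cap V_0^{-1}$ is not it; whether any tractable $R'$ exists is exactly what the paper side-steps by transforming the continued fraction so that the element $z_{a,b}$, rather than $z$, is the quantity required to lie in a parabola.
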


To prove Theorem \ref{thm:roots-disk}, we use a continued fraction representation for a ratio of Stern polynomials (Theorem \ref{thm:stern-cont-frac}) independently discovered by Reznick \cite{Reznick2008} and Schinzel \cite{Schinzel2014}.  This allows us to use the Parabola Theorem (Theorem \ref{thm:parabola}) to show that, for certain values of $z\in\mathbb C$, the denominators of these continued fractions can never be zero.  Along the way, we establish inequalities in $\C$ relating to the sums $1 + z + \cdots + z^{n-1}$ which may be of independent interest.  The most notable of these inequalities is Theorems \ref{thm:MO-ineq}, which proves a lower bound for this geometric series whenever $\Re z \geq 1$.

As a corollary, we obtain the following surprising fact, resolving a conjecture of Ulas and Ulas (\cite{Ulas2011}).

\begin{corr}\label{thm:prime-poly-irred}
	For each prime number $p$, the Stern polynomial $\stern p \lm$ is irreducible in $\Q$.
\end{corr}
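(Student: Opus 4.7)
My approach combines Theorem~\ref{thm:roots-disk} with a simple arithmetic identity for Stern polynomials at the point $\lm = 2$. A one-line induction on \eqref{eqn:stern-def} shows that
\[
\stern n 2 = n \quad \text{for all } n \geq 0,
\]
since the recursions $\stern{2n}2 = 2\stern n 2$ and $\stern{2n+1}2 = \stern n 2 + \stern{n+1}2$ mirror the identities $2n = n+n$ and $2n+1 = n + (n+1)$. A parallel induction gives $\stern n 0 = 1$ for every odd $n$, so in particular $\stern p\lm$ is primitive in $\Z[\lm]$ whenever $p$ is an odd prime.

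The case $p = 2$ needs no argument since $\stern 2\lm = \lm$ is already linear. For $p$ an odd prime, I would suppose for contradiction that $\stern p\lm = f(\lm)\, g(\lm)$ is a nontrivial factorization in $\Q[\lm]$; by primitivity and Gauss's lemma I may take $f, g \in \Z[\lm]$ with $\deg f, \deg g \geq 1$. Evaluating at $\lm = 2$ yields $f(2)\,g(2) = p$, and the primality of $p$ forces one of these integer factors, say $f(2)$, to satisfy $\abs{f(2)} = 1$.

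To close the argument, write $f(\lm) = c\prod_i (\lm - \alpha_i)$, where $c \in \Z\setminus\{0\}$ is the leading coefficient and the product runs over the roots of $f$ in $\C$ with multiplicity. Each $\alpha_i$ is a root of $\stern p\lm$, so Theorem~\ref{thm:roots-disk} gives $\abs{2 - \alpha_i} > 1$. Since $\deg f \geq 1$ the product is nonempty, and therefore
\[
\abs{f(2)} = \abs{c}\prod_i \abs{2 - \alpha_i} \,>\, \abs{c} \,\geq\, 1,
\]
contradicting $\abs{f(2)} = 1$.

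I do not foresee any substantial obstacle: the only nontrivial observation is the identity $\stern n 2 = n$, after which the argument reduces to a one-line product estimate powered by Theorem~\ref{thm:roots-disk}. The mild bookkeeping involved — checking primitivity so that Gauss's lemma applies, and tracking that the disk in Theorem~\ref{thm:roots-disk} is closed so the strict inequality $\abs{2-\alpha_i}>1$ is preserved — is routine.
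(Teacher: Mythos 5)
Your proof is correct and takes essentially the same route as the paper: evaluate at $\lm = 2$ using $\stern p 2 = p$, factor over $\C$, and invoke Theorem~\ref{thm:roots-disk} to get $|2-\alpha_i| > 1$ for every root, which forces any nontrivial integer factor to satisfy $|f(2)| > 1$ and contradicts the primality of $p$. Your write-up is slightly more careful than the paper's (explicit $p=2$ case, the primitivity check for Gauss's lemma, and working with $|f(2)|$ rather than asserting $P(2)>1$ directly), but these are cosmetic refinements of the identical argument.
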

Ulas and Ulas had verified this conjecture computationally for the first million primes $p$.  Additionally, Schinzel in \cite{Schinzel2011} proved Corollary \ref{thm:prime-poly-irred} for all primes $p < 2017$ by using finite differences to bound the leading coefficient of any proper divisor of $\stern n\lm$.  However, these previous attempts to prove the conjecture were algebraic in nature, whereas our proof depends on the analytic properties of $\calS$.


\paragraph{Acknowledgments.} This paper is adapted from the author's PhD dissertation \cite{Altizio2025}.  The author thanks his advisor, Dr. Bruce Reznick, for helpful correspondence.

\section{Stern Polynomial Preliminaries}\label{sec:prelims}

\paragraph{Notation.}
Write $\C$ as usual for the field of complex numbers, and let $\hat\C \coloneqq \C\cup\{\infty\}$ be the extended complex plane. For all positive integers $n$, we introduce the (non-standard) notation
\[
(\lm)_n\coloneqq 1 + \lm + \cdots + \lm^{n-1} = \frac{\lm^n - 1}{\lm - 1}\in \Z[\lm].
\]
For continuity purposes, we set $(1)_n = n$.  For any set $S\subseteq \C$, define the auxiliary sets $S^+$ and $S^-$ via
\[
S^+\coloneqq S\cap \{\Im w \geq 0\}\quad\text{and}\quad S^-\coloneqq S \cap\{\Im w \leq 0\}.
\]
Furthermore, for $U\subseteq\hat\C$, set $U^{-1} \coloneqq \{w^{-1}:w\in U\}\subseteq\hat\C$.  Finally, for all $z\in\C$ and $r\geq 0$, let $\ball z r$ be the open Euclidean ball centered at $z$ with radius $r$, and let $\cball z r$ denote the closed Euclidean ball centered at $z$ with radius $r$. 

We begin by recording some basic properties of the Stern polynomials.  All of these can be proven by induction.
\begin{prop}\label{prop:basic-patterns}
	Let $n$ be a nonnegative integer.
	
	\vspace{-3pt}
	
	\begin{enumerate}
		\item $\stern{n}{0} = 0$ when $n$ is even, and $\stern n 0 = 1$ when $n$ is odd.
		\item $\stern{n}{1} = s_n$, where $s_n$ is the Stern sequence \eqref{eqn:stern-seq}.
		\item $\stern{n}{2} = n$.
		\item When $n = 2^r$ for a nonnegative integer $r$, $\stern n \lm = \lm^r$.
		\item When $n = 2^r - 1$ for a nonnegative integer $r$, $\stern n\lm = (\lm)_r$.
	\end{enumerate}
\end{prop}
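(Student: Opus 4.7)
The plan is to prove each part by a straightforward induction, using the defining recurrence \eqref{eqn:stern-def} throughout. Parts (1)--(3) I would establish by strong induction on $n$ with base cases $n=0$ and $n=1$, and parts (4) and (5) by induction on $r$. Each induction is well-founded because for $n \geq 2$ the recurrence expresses $\stern{n}{\lm}$ in terms of $\stern{m}{\lm}$ with $m \leq \lceil n/2 \rceil + 1 \leq n - 1$.

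For (1), the base cases $\stern{0}{0} = 0$ and $\stern{1}{0} = 1$ are immediate. The even case $\stern{2n}{0} = 0 \cdot \stern{n}{0} = 0$ is trivial. For the odd case, the key observation is that exactly one of $n$ and $n+1$ is odd, so by the inductive hypothesis $\stern{n}{0} + \stern{n+1}{0} = 0 + 1 = 1$, whence $\stern{2n+1}{0} = 1$. Parts (2) and (3) proceed in the same spirit: substitute $\lm = 1$ or $\lm = 2$ into \eqref{eqn:stern-def} and verify that the resulting recurrence matches that of $(s_n)$ in case (2), and of the sequence $a_n = n$ (which satisfies $a_{2n} = 2a_n$ and $a_{2n+1} = a_n + a_{n+1}$) in case (3). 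The base cases at $n = 0, 1$ are trivial in each instance.

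For (4), I would induct on $r$: the base case $\stern{1}{\lm} = 1 = \lm^0$ holds, and the step is the one-line calculation $\stern{2^{r+1}}{\lm} = \lm \cdot \stern{2^r}{\lm} = \lm \cdot \lm^r = \lm^{r+1}$. Part (5) is the only part requiring a small algebraic observation, namely that $2^{r+1} - 1 = 2(2^r - 1) + 1$. Combining the odd-index recurrence with the inductive hypothesis and part (4) then yields
\[
\stern{2^{r+1}-1}{\lm} = \stern{2^r - 1}{\lm} + \stern{2^r}{\lm} = (\lm)_r + \lm^r = (\lm)_{r+1},
\]
with base case $\stern{0}{\lm} = 0 = (\lm)_0$ (the empty sum).

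I do not expect any genuine obstacle in this argument; the entire proof is routine bookkeeping with the recurrence. The two mildly non-trivial ingredients are the parity observation in part (1) and the index manipulation $2^{r+1} - 1 = 2(2^r - 1) + 1$ in part (5), neither of which should detain the reader.
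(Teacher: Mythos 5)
Your proof is correct and takes the same route the paper intends: the paper simply remarks that ``All of these can be proven by induction'' and gives no further detail, and your case-by-case inductions supply exactly that. One tiny slip: your well-foundedness bound $m \leq \lceil n/2\rceil + 1 \leq n-1$ fails for $n=2,3$; the correct observation is that the recurrence involves indices $m \leq \lceil n/2\rceil < n$ for $n\geq 2$, which is what actually justifies the strong induction.
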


One of the most fundamental properties of the Stern sequence is the recursion
\[
s_{2^km + r} = s_{2^k - r}s_m + s_rs_{m+1},
\]
valid for $m\geq 0$ and $2^k > r \geq 0$.  The Stern polynomials satisfy an analogous recursion.

\begin{prop}[{\cite[Lemma 1]{Schinzel2014}}] \label{prop:important-recursion}
	For all integers $m\geq 0$ and $2^k> r\geq 0$,
	\[
	\stern{2^km+r}\lm = \stern{2^k-r}\lm\stern m\lm + \stern r\lm \stern{m+1}\lm.
	\]
\end{prop}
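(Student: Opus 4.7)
The plan is to prove the identity by induction on $k \geq 0$. The base case $k = 0$ forces $r = 0$ by the constraint $2^k > r \geq 0$, so the identity reduces to $\stern m\lm = \stern 1\lm \cdot \stern m\lm + \stern 0\lm \cdot \stern{m+1}\lm$, which is immediate from $\stern 1\lm = 1$ and $\stern 0\lm = 0$.

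For the inductive step, fix $k \geq 0$, assume the identity holds at level $k$ for all $m$, and prove it at level $k+1$. Given $0 \leq r < 2^{k+1}$, split on parity. If $r = 2r'$ is even with $0 \leq r' < 2^k$, then $2^{k+1}m + r = 2(2^k m + r')$, so the even half of the defining recursion \eqref{eqn:stern-def} gives $\stern{2^{k+1}m+r}\lm = \lm\,\stern{2^k m + r'}\lm$. Apply the inductive hypothesis to $\stern{2^k m + r'}\lm$; the outer factor of $\lm$ is absorbed by observing that $\stern{2^{k+1}-r}\lm = \lm\,\stern{2^k - r'}\lm$ and $\stern r\lm = \lm\,\stern{r'}\lm$, since both $2^{k+1} - r$ and $r$ are also even.

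If $r = 2r' + 1$ is odd with $0 \leq r' \leq 2^k - 1$, the odd half of \eqref{eqn:stern-def} gives $\stern{2^{k+1}m+r}\lm = \stern{2^k m + r'}\lm + \stern{2^k m + r' + 1}\lm$. Apply the inductive hypothesis to each summand and regroup by coefficient of $\stern m\lm$ and $\stern{m+1}\lm$ to obtain
\[
\bigl[\stern{2^k - r'}\lm + \stern{2^k - r' - 1}\lm\bigr]\stern m\lm + \bigl[\stern{r'}\lm + \stern{r'+1}\lm\bigr]\stern{m+1}\lm.
\]
Then reverse-apply the odd recursion: $\stern{r'}\lm + \stern{r'+1}\lm = \stern{2r'+1}\lm = \stern r\lm$, and similarly the first bracket collapses to $\stern{2(2^k - r' - 1) + 1}\lm = \stern{2^{k+1} - r}\lm$. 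This yields the desired identity.

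The main subtlety is the edge case $r' + 1 = 2^k$ (i.e.\ $r = 2^{k+1} - 1$) in the odd case, where applying the inductive hypothesis to $\stern{2^k m + (r' + 1)}\lm$ requires $r' + 1 < 2^k$. I expect the cleanest fix is to observe that the identity extends verbatim to the boundary $r = 2^k$: then $2^k m + 2^k = 2^k(m+1)$ gives $\stern{2^k(m+1)}\lm = \lm^k\,\stern{m+1}\lm$ by iterating the even recursion, while the right-hand side evaluates to $\stern 0\lm\,\stern m\lm + \stern{2^k}\lm\,\stern{m+1}\lm = 0 + \lm^k\,\stern{m+1}\lm$ by Proposition \ref{prop:basic-patterns}. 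Once the identity is known at the boundary, the inductive hypothesis can be invoked even when $r' + 1 = 2^k$, closing the argument.
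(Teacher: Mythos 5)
The paper does not prove this proposition; it is stated with a citation to Schinzel's Lemma 1 and no argument is supplied, so there is no paper proof to compare against. Your induction on $k$ with a parity split on $r$ is the natural argument and it is correct. The one genuine subtlety, which you identified, is the odd case $r = 2r'+1$ with $r' = 2^k - 1$: there the second summand $\stern{2^k m + (r'+1)}\lm$ sits at the boundary $r'+1 = 2^k$, outside the stated range of the inductive hypothesis. Your fix is the right one, and your verification of the boundary instance is sound: $\stern{2^k m + 2^k}\lm = \stern{2^k(m+1)}\lm = \lm^k \stern{m+1}\lm$ by iterating the even recursion, which matches $\stern{0}\lm\,\stern m\lm + \stern{2^k}\lm\,\stern{m+1}\lm = \lm^k\stern{m+1}\lm$. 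With the hypothesis extended to $0 \leq r \leq 2^k$, the regrouping and the reverse application of the odd recursion ($\stern{r'}\lm + \stern{r'+1}\lm = \stern{2r'+1}\lm$, and likewise $\stern{2^k-r'-1}\lm + \stern{2^k-r'}\lm = \stern{2^{k+1}-r}\lm$) close the argument cleanly. This is a complete and correct proof of the cited result.
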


\begin{corr}\label{cor:special-case-stern}
	For positive integers $a$ and $b$, 
	\[
	\stern{2^{a+b} - 2^b + 1}\lm = (\lm)_a(\lm)_b + \lm^a.
	\]
\end{corr}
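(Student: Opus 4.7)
The plan is to apply Proposition \ref{prop:important-recursion} to a carefully chosen decomposition of $2^{a+b} - 2^b + 1$, and then collapse each of the resulting Stern polynomials using the explicit evaluations from Proposition \ref{prop:basic-patterns}.

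The first step is to rewrite
\[
2^{a+b} - 2^b + 1 = 2^b(2^a - 1) + 1,
\]
which exhibits the index in the form $2^k m + r$ with $k = b$, $m = 2^a - 1$, and $r = 1$. Since $b \geq 1$, the side condition $2^k > r \geq 0$ of Proposition \ref{prop:important-recursion} is satisfied, and applying the recursion yields
\[
\stern{2^{a+b} - 2^b + 1}\lm = \stern{2^b - 1}\lm \,\stern{2^a - 1}\lm + \stern 1\lm \,\stern{2^a}\lm.
\]

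The second step is to invoke parts (4) and (5) of Proposition \ref{prop:basic-patterns}, which give $\stern{2^r}\lm = \lm^r$ and $\stern{2^r - 1}\lm = (\lm)_r$, together with the trivial value $\stern 1\lm = 1$. Substituting these into the identity above turns the right-hand side into $(\lm)_b(\lm)_a + \lm^a = (\lm)_a(\lm)_b + \lm^a$, matching the claim.

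There is essentially no obstacle here; the corollary is a one-line consequence of the recursion once the correct splitting of the index is spotted. The only mild subtlety is to choose $k = b$ and $r = 1$ (rather than, say, $r = 0$, which would land on $\stern{2^b}\lm \stern{2^a - 1}\lm$ — still closed form but not producing the stated identity): this choice ensures that every Stern polynomial appearing on the right-hand side is indexed by an integer of the shape $2^r$ or $2^r - 1$, exactly the cases for which Proposition \ref{prop:basic-patterns} supplies closed-form evaluations.
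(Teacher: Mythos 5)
Your proof is correct and takes exactly the same approach as the paper: rewrite the index as $2^b(2^a-1)+1$, apply Proposition \ref{prop:important-recursion} with $k=b$, $m=2^a-1$, $r=1$, and evaluate the resulting Stern polynomials via parts (4) and (5) of Proposition \ref{prop:basic-patterns}. The closing remark about why $r=1$ is the right choice is a helpful addition but does not change the substance.
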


\begin{proof}
	Write $2^{a+b} - 2^b + 1 = 2^b(2^a - 1) + 1$.
	Apply Proposition \ref{prop:important-recursion} with $k = b$, $m = 2^a - 1$, and $r = 1$ to obtain
	\begin{align*}
		\stern{2^{a+b} - 2^b + 1}\lm &= \stern{2^b-1}\lm\stern{2^a-1}\lm+\stern 1\lm \stern{2^a}\lm \\
		&= (\lm)_b(\lm)_a + \lm^a.\qedhere
	\end{align*}
\end{proof}

Proposition \ref{prop:important-recursion} may be restated in a form that highlights its underlying structure.  Note that any odd integer can be expressed in the form
\begin{equation}\label{eqn:bracket-def}
	[[a_1,\cdots, a_t]] := 2^{a_1 + \cdots + a_t} - 2^{a_2 + \cdots + a_t} + \cdots + (-1)^{t-1}2^{a_t} + (-1)^t,
\end{equation}
where $a_1,\ldots, a_t$ are positive integers.  For example, $2^{a+b} - 2^b + 1 = [[a,b]]$, so Corollary \ref{cor:special-case-stern} takes the form
\begin{equation}\label{eqn:special-case-exmp}
	\stern{[[a,b]]}\lm = (\lm)_a(\lm)_b + \lm^a.
\end{equation}
The notation \eqref{eqn:bracket-def} satisfies the recursive properties
\begin{align}
	[[a_1,\ldots, a_{t-1},a_t]] &= 2^{a_1 + \cdots + a_t} - [[a_2,\ldots, a_t]] \label{eqn:recur-head}\\
	&= 2^{a_t} [[a_1,\ldots, a_{t-1}]]+ (-1)^t.\label{eqn:recur-tail}
\end{align}

\begin{thm}[Stern Polynomial Recursion]\label{thm:stern-poly-recursion}
	For all sequences of positive integers $a_1$, $\ldots$, $a_t$,
	\begin{equation}\label{eqn:stern-recursion}
		\stern{[[a_1,\ldots, a_t]]}\lm =
		(\lm)_{a_1}\stern{[[a_2,\ldots, a_t]]}\lm + \lm^{a_1}\stern{[[a_3,\ldots, a_t]]}\lm.
	\end{equation}
\end{thm}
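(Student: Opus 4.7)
The plan is to deduce the identity in one line from Proposition \ref{prop:important-recursion} by choosing the decomposition $2^k m + r$ of $N \coloneqq [[a_1, \ldots, a_t]]$ in which each of the four Stern evaluations $\stern{2^k - r}\lm$, $\stern m\lm$, $\stern r\lm$, and $\stern{m+1}\lm$ produced on the right-hand side is immediately recognizable as one of the factors appearing in \eqref{eqn:stern-recursion}.

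The correct choice is $k \coloneqq a_2 + \cdots + a_t$, $m \coloneqq 2^{a_1} - 1$, and $r \coloneqq [[a_3, \ldots, a_t]]$, with the convention $[[\,]] = 1$ so that the case $t = 2$ is handled uniformly. First I would verify the arithmetic: applying \eqref{eqn:recur-head} to the subsequence $(a_2, \ldots, a_t)$ shows that $2^k - r = [[a_2, \ldots, a_t]]$, and applying \eqref{eqn:recur-head} once more to the full sequence gives $N = 2^{a_1} \cdot 2^k - [[a_2, \ldots, a_t]] = 2^k m + r$; the inequality $0 \leq r < 2^k$ is then immediate since $r < 2^{a_3 + \cdots + a_t} \leq 2^k$, validating the hypotheses of Proposition \ref{prop:important-recursion}.

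With the decomposition confirmed, the four evaluations fall out at once. Proposition \ref{prop:basic-patterns}(4) gives $\stern{m+1}\lm = \stern{2^{a_1}}\lm = \lm^{a_1}$, Proposition \ref{prop:basic-patterns}(5) gives $\stern{m}\lm = \stern{2^{a_1} - 1}\lm = (\lm)_{a_1}$, and by the rewriting above $\stern r\lm = \stern{[[a_3, \ldots, a_t]]}\lm$ while $\stern{2^k - r}\lm = \stern{[[a_2, \ldots, a_t]]}\lm$. Substituting these four identifications into Proposition \ref{prop:important-recursion} produces \eqref{eqn:stern-recursion} verbatim.

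The main obstacle is really just identifying the correct decomposition. The naive guess $k = a_1$, motivated by the fact that \eqref{eqn:stern-recursion} peels off the leading entry $a_1$, fails: neither $m$ nor $m + 1$ becomes a pure power of $2$, so the evaluations in Proposition \ref{prop:basic-patterns}(4)--(5) no longer apply, and the factor $(\lm)_{a_1}$ cannot be produced. The productive insight is to observe that \eqref{eqn:recur-head} rewrites $N$ as $2^{a_1} \cdot 2^{a_2 + \cdots + a_t} - [[a_2, \ldots, a_t]]$, which forces $m + 1 = 2^{a_1}$ and thereby places the exponent $a_1$ in precisely the $\lm^{a_1}$ and $(\lm)_{a_1}$ slots demanded by the right-hand side of \eqref{eqn:stern-recursion}. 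Once this alignment is recognized, the rest is a short bookkeeping exercise with \eqref{eqn:recur-head}.
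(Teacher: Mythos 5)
Your proposal is correct and takes essentially the same approach as the paper: both apply Proposition \ref{prop:important-recursion} with $k = a_2 + \cdots + a_t$, $m = 2^{a_1} - 1$, and $r = [[a_3,\ldots,a_t]]$, derived via the head recursion \eqref{eqn:recur-head}. Your writeup is slightly more careful than the paper's in two small respects — you explicitly verify the hypothesis $0 \leq r < 2^k$ and spell out the convention $[[\,]] = 1$ that makes the $t = 2$ case work — but the core argument is identical.
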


\begin{proof}
	For simplicity let $r \coloneqq a_1 + \cdots + a_t$.  Define
	\begin{align*}
		n &= [[a_1,\ldots, a_t]],\\
		n' &= [[a_2,\ldots, a_t]],\quad\text{and}\\
		n'' &= [[a_3,\ldots, a_t]].
	\end{align*}
	By repeated application of \eqref{eqn:recur-head},
	\[
	n = 2^r - n' = 2^r - (2^{r-a_1} - n'') = 2^{r - a_1}(2^{a_1} - 1) + n''.
	\]
	Applying Proposition \ref{prop:important-recursion} with $k = r - a_1$, $m = 2^{a_1} - 1$, and $p = n''$ yields
	\begin{align*}
		\stern{n}\lm &= \stern{2^{r-a_1} - n''}\lm \stern{2^{a_1} -1}\lm + \stern{n''}\lm \stern{2^a}\lm \\
		&= \stern{n'}\lm(\lm)_{a_1} + \stern{n''}\lm \lm^{a_1}.\qedhere
	\end{align*}
\end{proof}

Equation \eqref{eqn:stern-recursion} bears similarities identities satisfied by continuants of continued fractions (see \cite{Khinchin1997}).  This suggests the Stern polynomials have a continued fraction representation in terms of the sequence $a_i$, which we now state and prove.

\begin{corr}[{\cite[Theorem 1]{Schinzel2014}}]\label{thm:stern-cont-frac}
	Let $a_1,\ldots, a_t$ be positive integers, where $t\geq 2$.  Then 
	\begin{equation}\label{eqn:stern-cont-frac}
		\frac{\stern{[[a_1,\ldots, a_t]]}{\lm}}{\stern{[[a_2,\ldots, a_t]]}{\lm}}
		=
		(\lm)_{a_1} + \cfrac{\lm^{a_1}}{(\lm)_{a_2} + \cfrac{\lm^{a_2}}{\cdots + \cfrac{\cdots}{(\lm)_{a_{t-1}} + \cfrac{\lm^{a_{t-1}}}{(\lm)_{a_t}}}}}\,.
	\end{equation}
\end{corr}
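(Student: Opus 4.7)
The plan is to induct on $t$, with the Stern polynomial recursion (Theorem \ref{thm:stern-poly-recursion}) supplying the one-step unfolding that drives the induction. Once divided through by $\stern{[[a_2,\ldots,a_t]]}{\lm}$, identity \eqref{eqn:stern-recursion} peels off a single level of the continued fraction; iterating this $t-1$ times produces the full expansion \eqref{eqn:stern-cont-frac}.

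For the base case $t=2$, I would invoke \eqref{eqn:special-case-exmp} to write $\stern{[[a_1,a_2]]}{\lm} = (\lm)_{a_1}(\lm)_{a_2} + \lm^{a_1}$, together with $\stern{[[a_2]]}{\lm} = \stern{2^{a_2}-1}{\lm} = (\lm)_{a_2}$ from part (5) of Proposition \ref{prop:basic-patterns}. Dividing these two expressions immediately yields the depth-one continued fraction $(\lm)_{a_1} + \lm^{a_1}/(\lm)_{a_2}$.

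For the inductive step, assume the corollary holds for all bracket sequences of length $t-1 \geq 2$, and take a sequence $a_1,\ldots,a_t$ of length $t$. The polynomial $\stern{[[a_2,\ldots,a_t]]}{\lm}$ is a nonzero element of $\Z[\lm]$ (its value at $\lm=2$ is the positive integer $[[a_2,\ldots,a_t]]$, by part (3) of Proposition \ref{prop:basic-patterns}), so dividing \eqref{eqn:stern-recursion} through by it is legitimate in the field $\Q(\lm)$ and gives
\[
\frac{\stern{[[a_1,\ldots, a_t]]}{\lm}}{\stern{[[a_2,\ldots, a_t]]}{\lm}} \;=\; (\lm)_{a_1} + \cfrac{\lm^{a_1}}{\stern{[[a_2,\ldots, a_t]]}{\lm}\Big/\stern{[[a_3,\ldots, a_t]]}{\lm}}.
\]
Applying the inductive hypothesis to the shifted sequence $a_2,\ldots,a_t$ rewrites the inner ratio as the continued fraction beginning with $(\lm)_{a_2}$, and substituting yields \eqref{eqn:stern-cont-frac}.

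There is essentially no technical obstacle beyond careful bookkeeping: all of the genuine content has been packaged into Theorem \ref{thm:stern-poly-recursion}, and the corollary is simply its translation into the language of continued fractions. The only subtlety is verifying that the denominators appearing along the way are nonzero as polynomials, which is handled by the evaluation at $\lm = 2$ above.
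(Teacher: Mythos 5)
Your proof is correct and follows the same route as the paper's: induction on $t$, with the base case $t=2$ handled by equation \eqref{eqn:special-case-exmp} and $\stern{[[a_2]]}{\lm}=(\lm)_{a_2}$, and the inductive step obtained by dividing the Stern Polynomial Recursion \eqref{eqn:stern-recursion} through by $\stern{[[a_2,\ldots,a_t]]}{\lm}$ and invoking the hypothesis on the shifted sequence. Your added remark that the denominator is a nonzero polynomial (via evaluation at $\lm=2$) is a sensible bit of bookkeeping the paper leaves implicit.
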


\begin{proof}
	Proceed by induction on $t$.  The base case $t = 2$ follows from \eqref{eqn:special-case-exmp} and the calculation
	\[
	\frac{\stern{[[a,b]]}\lm}{\stern{[[b]]}\lm} = \frac{(\lm)_a(\lm)_b+\lm^a}{(\lm)_b} = (\lm)_a + \frac{\lm^a}{(\lm)_b}.
	\] 
	For the inductive step, apply Theorem \ref{thm:stern-poly-recursion}.
\end{proof}
Corollary \ref{thm:stern-cont-frac} will be the workhorse for our analysis of the roots of $\stern n\lm$.

\section{Continued Fractions and the Parabola Theorem}

Recall that, for a complex number $b_0$ and any finite sequences of complex numbers $a_1,\ldots, a_t$ and $b_1,\ldots, b_t$, the (generalized) continued fraction $b_0 + \boldsymbol{K}(a_n|b_n)$ is the quotient
\[
b_0 + \boldsymbol{K}(a_n|b_n) \coloneqq b_0 + \cfrac{a_1}{b_1 + \cfrac{a_2}{\cdots + \cfrac{a_t}{b_t}}}.
\]
While continued fractions are most often studied in number theory, there has been significant study toward the \textit{convergence problem} for generalized continued fractions -- that is, determining sufficient conditions on $(a_n)_{n=1}^\infty$ and $(b_n)_{n=1}^\infty$ for which the infinite continued fraction $b_0 + \boldsymbol{K}(a_n|b_n)$ converges in $\C$.  In this regard, one may also consider a continued fraction as a sequence of fractional linear transformations
\begin{equation}\label{eqn:geom-cont}
	S_n = s_0\circ s_1\circ\cdots\circ s_n,
\end{equation}
where 
\[
s_0(w) \coloneqq b_0 + w\quad\text{and}\quad s_n(w) \coloneqq \frac{a_n}{b_n + w}.
\]
This connection between continued fractions and fractional linear transformations was first discovered by Weyl \cite{Weyl1910}. While we will not need convergence of continued fractions, we will borrow some of the techniques and theorems from this field.  For a more detailed investigation, consult \cite[Chapter 3]{Lorentzen2008} and the references therein.

\par One way to analyze a generalized continued fraction is to find an \textbf{element set} and a \textbf{value set} for this fraction.  We present the definitions for these sets in the special case where $b_j = 1$ for all $j$; later, we will see that this presents no loss of generality.

\begin{defn}\label{defn:value-set}
	A set $V\subset \hat\C$ is a \textbf{value set} for $\boldsymbol{K}(a_n|1)$ if
	\[
	\frac{a_n}{1 + V}\subseteq V\text{ for }n=1,2,3,\ldots.
	\] 
\end{defn}

\begin{defn}\label{defn:element-set}
	For a given value set $V$, the set $E\subset\hat \C$ given by 
	\[
	E\coloneqq \left\{a\in \C\,{:}\, \frac{a}{1+V}\subseteq V\right\}
	\]
	is called the \textbf{element set} corresponding to the value set $V$.
\end{defn}

Observe that if $a_n\in E$ for all $n$, and $0\in V$, then $\boldsymbol{K}(a_n|1) = S_n(0)\in V$ for all $n\geq 0$.  This allows us to bound $\boldsymbol{K}(a_n|1)$ in $\hat\C$ using geometric arguments that are more precise than those found through just the Triangle Inequality.

\par One may hope that, given an element set $E$ for a continued fraction, we may find a value set $V$ corresponding to $E$.  However,
in most cases, this is a nearly impossible task. It is significantly easier
to proceed in the opposite direction: fix a value set $V$ and find an element set $E$ corresponding to $V$.  There are many theorems in the literature which take this form.  We state the most relevant result for us below.

\begin{thm}[Parabola Theorem, {\cite[Theorem 3.43]{Lorentzen2008}}]\label{thm:parabola}
	For fixed $|\alpha| < \tfrac\pi 2$, let
	\[
	V_\alpha\coloneqq -\tfrac12 + e^{i\alpha}\overline{\mathbb H} = \{w\in\C: \Re(we^{- i\alpha})\geq -\tfrac12\cos\alpha\}
	\]
	and
	\[
	E_\alpha \coloneqq \{a\in\C: |a| - \Re(ae^{-2i\alpha})\leq \tfrac12\cos^2\alpha\}.
	\]
	Then $E_\alpha$ is the element set for continued fractions $\boldsymbol{K}(a_n | 1)$ corresponding to the  value set $V_\alpha$.
\end{thm}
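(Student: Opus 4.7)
The plan is to convert the set-theoretic containment $\tfrac{a}{1+V_\alpha} \subseteq V_\alpha$ into an explicit inequality on $a$ by exploiting the fact that $V_\alpha$ is a closed half-plane and that the Möbius transformation $w\mapsto 1/w$ sends a half-plane not containing $0$ to a closed disk through $0$. Because every reduction in this chain will be an ``iff'', the resulting inequality will characterize the element set $E_\alpha$ exactly.

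First I would rewrite $V_\alpha$ as the closed half-plane $\{w\in\C:\Re(we^{-i\alpha})\geq -\tfrac12\cos\alpha\}$ with inward unit normal $e^{i\alpha}$, so that the signed distance from a point $z_0$ to the boundary line (measured into $V_\alpha$) equals $\Re(z_0 e^{-i\alpha}) + \tfrac12\cos\alpha$. Translating by $1$, the set $1+V_\alpha$ equals $\{w:\Re(we^{-i\alpha})\geq \tfrac12\cos\alpha\}$, which avoids $0$ precisely because $|\alpha|<\tfrac\pi 2$ forces $\cos\alpha>0$.

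Next I would invert: the standard formula that $\{\Re u\geq c>0\}$ is sent by $u\mapsto 1/u$ to the closed disk $\cball{1/(2c)}{1/(2c)}$, combined with the substitution $u = we^{-i\alpha}$ and a multiplication by $a$, gives
\[
\frac{a}{1+V_\alpha} = \cball{ae^{-i\alpha}\sec\alpha}{|a|\sec\alpha}.
\]
A closed disk $\cball{z_0}{r}$ is contained in a closed half-plane if and only if the signed distance from $z_0$ to the boundary is at least $r$. Applying this with $z_0 = ae^{-i\alpha}\sec\alpha$ and $r = |a|\sec\alpha$ gives
\[
\sec\alpha\cdot\Re(ae^{-2i\alpha}) + \tfrac12\cos\alpha \;\geq\; |a|\sec\alpha,
\]
and multiplying both sides by $\cos\alpha$ rearranges to $|a| - \Re(ae^{-2i\alpha}) \leq \tfrac12\cos^2\alpha$, which is exactly the condition defining $E_\alpha$. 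Since each step is reversible, this shows $a\in E_\alpha$ if and only if $\tfrac{a}{1+V_\alpha}\subseteq V_\alpha$, establishing the theorem.

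There is no real obstacle here; the statement is essentially a geometric identity. The only care needed is in tracking the rotation $e^{-i\alpha}$ through the Möbius inversion and in using the correct signed-distance formula for a line whose normal direction is $e^{i\alpha}$ rather than the real axis.
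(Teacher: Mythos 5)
The paper does not prove this result; it is imported verbatim from Lorentzen and Waadeland as \cite[Theorem 3.43]{Lorentzen2008}, so there is no in-paper argument to compare against. Your blind derivation is correct and is, in fact, essentially the textbook derivation of the parabola boundary: translate $V_\alpha$ by $1$, invert the resulting half-plane to a disk through the origin, scale and rotate by $a$, and encode the disk-in-half-plane containment as a signed-distance inequality; each step is an equivalence, so $E_\alpha$ is exactly the element set. One small precision worth noting: since $\infty\notin 1+V_\alpha$, the image of $1+V_\alpha$ under $w\mapsto a/w$ is the closed disk $\cball{ae^{-i\alpha}\sec\alpha}{|a|\sec\alpha}$ with its boundary point $0$ deleted, not the full closed disk. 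This does not affect your argument, because $V_\alpha$ is closed, so the punctured disk lies in $V_\alpha$ if and only if the full closed disk does, and the final inequality is unchanged. The case $a=0$ degenerates to $\{0\}\subseteq V_\alpha$, which holds and is consistent with $0\in E_\alpha$.
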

The region $V_\alpha$ is a half-plane whose boundary is a line intersecting the real axis at $z = -\tfrac12$.  In particular, because $|\alpha| < \tfrac\pi 2$, $V_\alpha$ contains the half-line $[-\tfrac12,\infty)$.  The boundary of $E_\alpha$ is a parabola with focus at the origin and vertex at $-\tfrac14 e^{2i\alpha}\cos^2\alpha$.  This parabola intersects the real axis at $z = -\tfrac14$.

\begin{remark}
	Some sources include $\infty$ in the half-plane $V_\alpha$.  However, because $-1\notin V_\alpha$, we know \textit{a priori} that $S_n(w) = \tfrac{a_n}{1+S_{n-1}(w)}$ never equals $\infty\in\hat\C$.  We choose to ignore $\infty$ to make the presentation cleaner.
\end{remark}

We now tie the continued fraction theory back to Stern polynomials.
Recall that Corollary \ref{thm:stern-cont-frac} expresses the ratio
of two Stern polynomials as a continued fraction with elements in $\C$. 
This form on its own is difficult to work with.  We instead note (e.g. \cite[Corollary 2.15]{Lorentzen2008}) that any continued fraction $\boldsymbol{K}(a_n|b_n)$ with $b_j\neq 0$ is equivalent to the continued fraction $\boldsymbol{K}(c_n|1)$, where $c_j = \tfrac{a_j}{b_jb_{j+1}}$.  As an explicit example, 
\begin{equation}\label{eqn:cfrac-example}
	\frac{\stern{[[2,3,5]]}\lm}{\stern{[[3,5]]}\lm} = (\lm)_2 + \cfrac{\lm^2}{(\lm)_3 + \cfrac{\lm^3}{(\lm)_5}}
	= (\lm)_2 \left[1 + \cfrac{\dfrac{\lm^2}{(\lm)_2(\lm)_3}}{1 + \cfrac{\lm^3}{(\lm)_3(\lm)_5}}\right].
\end{equation}
\begin{remark}
	While continued fractions of the form $\boldsymbol{K}(1|d_n)$ are more common, the coefficients $d_n$ are significantly messier than the coefficients $c_n$ (see \cite[Corollary 2.15]{Lorentzen2008}).  We opt to use the less-common form $\boldsymbol{K}(c_n|1)$ to simplify our analysis.
\end{remark}
Equation \eqref{eqn:cfrac-example} suggests that the ratios $\tfrac{z^a}{(z)_a(z)_b}$, where $a$ and $b$ are any positive integers, may play some importance.
For this reason, we define $z_{a,b}\coloneqq \tfrac{z^a}{(z)_a(z)_b}$ and  
\[
\calA_z\coloneqq \left\{z_{a,b}:(a,b)\in\N^2\right\}.
\]
Suppose $\calA_z\subseteq E_\alpha$ for some angle $\alpha$ with $|\alpha| < \tfrac{\pi}2$.  
Then $V_\alpha$ is a value set for the continued fraction \eqref{eqn:stern-cont-frac} at $\lm = z$.  In particular, $\infty$ is not a possible value for this fraction, so $\stern n z \neq 0$ for any $z$.

\par Throughout the rest of the paper, we use $\alpha = \tfrac{\pi}{12}$ in our applications of Theorem \ref{thm:parabola}.  In this case, Theorem \ref{thm:parabola} reduces to the following result.

\begin{thm}\label{thm:criterion-nonzero}
	Let $E$ denote the parabola
	\begin{align*}
		E\coloneqq E_{\pi/12} &= \left\{a\in\C: |a| - \Re (ae^{-\pi i/6})\leq \frac{2+\sqrt 3}8\right\}\\
		&= \left\{x+yi\in\C: \sqrt{x^2+y^2} \leq \frac{\sqrt 3}2x + \frac 12 y + \frac{2+\sqrt 3}8\right\}.
	\end{align*}
	Suppose $z\in\C$ is a complex number such that 
	$z_{a,b}\in E$ for all $(a,b)\in\N^2$.  
	Then $z$ is not a root of any Stern polynomial.
\end{thm}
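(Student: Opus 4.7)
The plan is to induct on the length $t$ of the bracket representation of $n$, using Corollary \ref{thm:stern-cont-frac} to reduce a ratio of Stern polynomials to a continued fraction of the form $b_0[1 + \boldsymbol{K}(c_k \mid 1)]$ and then invoking the Parabola Theorem to locate its value. First I would reduce to odd $n \geq 3$: since $\stern{2^k m} \lm = \lm^k \stern{m}\lm$, any $z \neq 0$ that is a root of $\stern{n}$ must also be a root of the odd Stern polynomial obtained by stripping powers of $\lm$. The case $z = 0$ is handled separately by Proposition \ref{prop:basic-patterns}(i), and $n = 1$ is trivial. For odd $n \geq 3$, the binary expansion yields $n = [[a_1, \ldots, a_t]]$ for some integer $t \geq 2$ and positive integers $a_i$.

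A preliminary observation is that $(z)_a \neq 0$ for every $a \geq 1$. Indeed, if $(z)_a = 0$ then $z_{a,a} = z^a/(z)_a^2$ would lie in $\hat\C \setminus \C$ (note that $z^a = 0$ forces $z = 0$, whence $(z)_a = 1$, a contradiction), violating the hypothesis $z_{a,a} \in E \subseteq \C$. Consequently, every partial denominator in the continued fraction of Corollary \ref{thm:stern-cont-frac} is nonzero, so the standard equivalence transformation illustrated in \eqref{eqn:cfrac-example} applies and yields
\[
\frac{\stern{[[a_1,\ldots,a_t]]} z}{\stern{[[a_2,\ldots,a_t]]} z} \;=\; (z)_{a_1}\Bigl[1 + \boldsymbol{K}_{k=1}^{t-1}(c_k \mid 1)\Bigr], \qquad c_k \coloneqq z_{a_k, a_{k+1}}.
\]
By hypothesis, every $c_k$ lies in $E = E_{\pi/12}$, and since $V_{\pi/12}$ contains $[-\tfrac12, \infty)$ we have $0 \in V_{\pi/12}$. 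Iterating Definition \ref{defn:value-set} as in the observation following Definition \ref{defn:element-set} then gives $\boldsymbol{K}_{k=1}^{t-1}(c_k \mid 1) \in V_{\pi/12}$. A quick check shows $-1 \notin V_{\pi/12}$: the defining inequality $\Re(-e^{-i\pi/12}) \geq -\tfrac12 \cos(\pi/12)$ reduces to $-1 \geq -\tfrac12$, which is false. Hence $1 + \boldsymbol{K}_{k=1}^{t-1}(c_k \mid 1) \neq 0$, and combined with $(z)_{a_1} \neq 0$, the right-hand side of the display is a nonzero complex number.

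To finish, I would induct on $t$. The base case $t = 1$ follows from $\stern{2^{a_1}-1} z = (z)_{a_1} \neq 0$ by Proposition \ref{prop:basic-patterns}(v). For the inductive step, the inductive hypothesis gives $\stern{[[a_2,\ldots,a_t]]} z \neq 0$, so the left-hand side of the display is a well-defined complex ratio; since the right-hand side is nonzero, $\stern{[[a_1,\ldots,a_t]]} z \neq 0$, completing the induction.

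The main potential pitfall is the bookkeeping in the equivalence transformation: one must verify that $c_k = z_{a_k, a_{k+1}}$ holds uniformly for every $k$, including $k = 1$ where the outer constant $(z)_{a_1}$ must be folded in so that the coefficient formula $c_k = a_k^{\mathrm{num}}/(b_{k-1} b_k)$ gives $z_{a_1, a_2}$ rather than merely $z^{a_1}/(z)_{a_2}$. This is routine once checked against \eqref{eqn:cfrac-example}, so I expect no substantive difficulty beyond ensuring that the hypothesis $\calA_z \subseteq E$ is strong enough to cover both the $c_k$'s and the auxiliary nonvanishing of the $(z)_a$'s.
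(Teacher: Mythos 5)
Your proposal is correct and follows the same approach the paper intends: feed Corollary~\ref{thm:stern-cont-frac} through the equivalence transformation, observe that the resulting partial numerators are exactly the $z_{a_k,a_{k+1}}\in\calA_z\subseteq E_{\pi/12}$, and invoke the Parabola Theorem to keep the value in $V_{\pi/12}$ and hence away from $-1$. The paper leaves the justification to the terse paragraph preceding the theorem statement, whereas you supply the details it elides -- in particular the nonvanishing of $(z)_a$ (via $z_{a,a}$) needed for the equivalence transformation and the induction on $t$ needed to pass from a nonzero ratio to a nonzero numerator -- both of which are correct and worth making explicit.
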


The remainder of this paper is dedicated to showing $z_{a,b}\in E$ whenever $z$ lies in the set
\[
\calB\coloneqq \cball{2}{1} = \{\abs{w-2} \leq 1\},
\]
thus proving Theorem \ref{thm:criterion-nonzero}.  See Figure \ref{fig:Az-sample} for an example.

\begin{figure}[ht]
	\centering
	\includegraphics[scale=0.8]{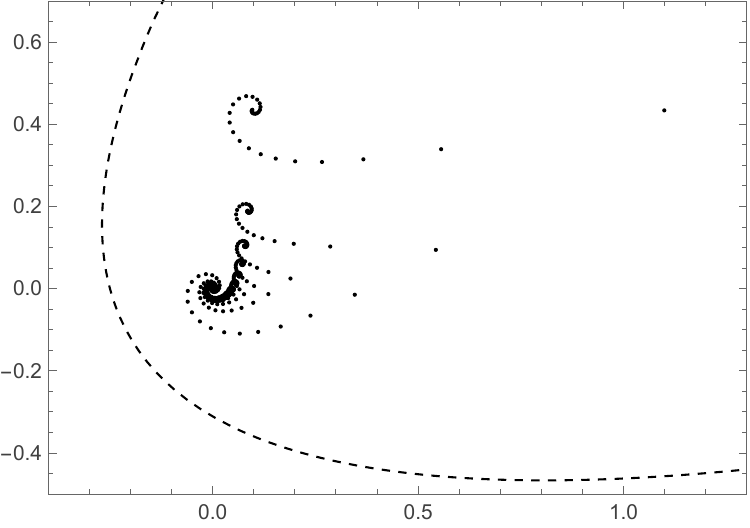}
	\caption{\small The set $\calA_z$, where $z = (2-\cos\tfrac\pi 7) + i\sin\tfrac \pi 7\in\partial\calB$, enclosed in  $E_{\pi/12}$ (dashed).}
	\label{fig:Az-sample}
\end{figure}

Let us see how Theorem \ref{thm:criterion-nonzero} proves Theorem \ref{thm:prime-poly-irred}.

\begin{proof}[Proof (Theorem \ref{thm:prime-poly-irred}).]
	Suppose $P$ is a factor of some Stern polynomial with $\deg P = d$, and factor it over $\C$:
	\[
	P(\lm) = 1 + c_1\lm + \cdots + c_d\lm^d = c_d\prod_{k=1}^d(\lm-\mu_k).
	\]
	Then $P(2) = c_d\prod_{k=1}^d (2-\mu_k)$.  Because $c_d \geq 1$ and $|2-\mu_k| > 1$ by Theorem \ref{thm:criterion-nonzero}, it follows that $P(2) > 1$.  Thus, if $\stern p\lm = P(\lm)Q(\lm)$ for non-constant $P$ and $Q$ in $\Z[\lm]$, then 
	\[
	p = \stern p 2 = P(2)Q(2).
	\]
	This implies $P(2) = 1$ or $Q(2) = 1$, which is impossible.
\end{proof}

\section{A Potpourri of Complex Inequalities}

This chapter is the main content of the paper.  We prove several inequalities in $\mathbb C$ needed for the main theorem.  Some results tie directly into the Stern polynomials $\stern n\lm$, while others are independent results which seem interesting in their own rights.

\subsection{Preliminaries}

Here we list some inequalities which are not directly related to complex numbers but which will be used several times in the following three sections.

Our first inequality is a lower bound on $\sin x$.  While it is not as tight
as the Taylor inequality $\sin x \geq x - \frac{x^3}6$ for small $x$, it exhibits a second equality case at $x = \pi$.

\begin{prop}\label{prop:sinc-ineq}
	For all $x \in\R$,
	\begin{equation}\label{ineq-sinc}
		\sin x \geq x - \frac{x^2}{\pi}.
	\end{equation}
\end{prop}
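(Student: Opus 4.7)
The plan is to reduce the inequality to the compact interval $[0, \pi]$ and then handle that interval via a zero-counting argument based on Rolle's theorem. For $x \leq 0$, I would combine the classical bound $\sin x \geq x$ (immediate from $\cos x - 1 \leq 0$) with the observation $x - x^2/\pi \leq x$ (because $-x^2/\pi \leq 0$) to chain the two inequalities. For $x \geq \pi$, the substitution $u = x - \pi \geq 0$ yields $\sin x = -\sin u$ and $x - x^2/\pi = -u - u^2/\pi$, so the desired inequality is equivalent to $\sin u \leq u + u^2/\pi$; this follows from $\sin u \leq u \leq u + u^2/\pi$ for $u \geq 0$.

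The heart of the proof is the case $x \in [0, \pi]$. I would set $h(x) \coloneqq \sin x - x + x^2/\pi$ and verify by direct computation that $h(0) = h(\pi) = 0$ and $h'(0) = h'(\pi) = 0$, where $h'(x) = \cos x - 1 + 2x/\pi$. The key observation is that $h''(x) = 2/\pi - \sin x$ has \emph{exactly two} zeros on $(0, \pi)$, namely $\arcsin(2/\pi)$ and $\pi - \arcsin(2/\pi)$.

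Suppose for contradiction that $h(y) < 0$ for some $y \in (0, \pi)$. The intermediate value theorem then supplies one additional zero of $h$ in each of $(0, y)$ and $(y, \pi)$, so $h$ has at least four zeros in $[0, \pi]$. Applying Rolle's theorem between consecutive zeros produces at least three interior zeros of $h'$; combined with $h'(0) = h'(\pi) = 0$, the derivative $h'$ has at least five zeros on $[0, \pi]$. One more pass of Rolle forces $h''$ to have at least four zeros in $(0, \pi)$, contradicting the count above. Hence $h \geq 0$ on $[0, \pi]$. The main subtlety, such as it is, lies in exploiting the vanishing of $h'$ at \emph{both} endpoints to push the final count up to four; without those two extra zeros, Rolle would only produce two zeros of $h''$ and the contradiction would fail.
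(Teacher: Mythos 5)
Your plan is correct in outline but takes a genuinely different route from the paper, and it has one fixable gap on the central interval. The paper observes that $h(x) \coloneqq \sin x - x + x^2/\pi$ is symmetric about $x = \tfrac\pi2$, which reduces everything to $x \in [0, \tfrac\pi2]$ and $x \geq \pi$; on $[0,\tfrac\pi2]$ it proves $\cos x \geq 1 - \tfrac{2x}{\pi}$ from concavity of cosine and integrates, and for $x \geq \pi$ that cosine bound is trivial because the right side is $\leq -1$. Your version replaces the symmetry-plus-concavity observation with a Rolle zero-count; that is heavier machinery, but it is self-contained and avoids spotting the reflection symmetry. Your treatments of $x \leq 0$ and $x \geq \pi$ are correct and elementary.

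The gap is in the step that produces the extra zeros of $h$ on $[0,\pi]$. You say that $h(y)<0$ together with the intermediate value theorem supplies a zero of $h$ in each of $(0,y)$ and $(y,\pi)$. But IVT requires a sign change, and $h(0)=0$, $h(y)<0$ provides none: nothing yet rules out $h$ dropping monotonically from $0$ straight down to $h(y)$. To get the sign change you must first argue that $h>0$ somewhere in $(0,y)$ and somewhere in $(y,\pi)$. That is where the remaining endpoint data enter: $h(0)=h'(0)=0$ and $h''(0)=\tfrac{2}{\pi}>0$ give $h(x)=\tfrac{1}{\pi}x^2+o(x^2)>0$ just to the right of $0$, and the symmetric computation (or the reflection $h(\pi-x)=h(x)$) gives $h>0$ just to the left of $\pi$. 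Once that is in place, IVT does yield zeros $z_1\in(0,y)$ and $z_2\in(y,\pi)$, and your count proceeds exactly as stated: four zeros of $h$, three interior Rolle zeros of $h'$ plus the two endpoint zeros for five total, hence four Rolle zeros of $h''$, contradicting the two zeros of $h''(x)=\tfrac{2}{\pi}-\sin x$ on $(0,\pi)$. So the subtlety you flagged (using $h'(0)=h'(\pi)=0$ in the count) is real, but there is a second use of the endpoint Taylor data one step earlier that your sketch omits.
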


\begin{figure}[ht]
	\centering
	\includegraphics[scale=0.75]{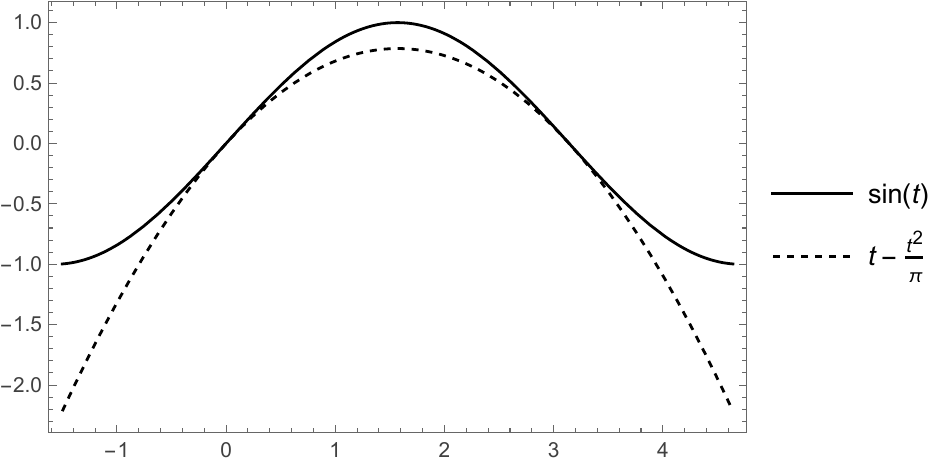}
	\caption{\small{A plot of the inequality in Proposition \ref{prop:sinc-ineq}}.}
	\label{ineq:sinc}
\end{figure}

\begin{proof}
	There are two cases to consider.
	
	\begin{itemize}
		\item First suppose $x\in[0,\pi]$.  Because both $\sin x$ and $x - \tfrac{x^2}{\pi}$ are symmetric about the axis $x = \tfrac\pi 2$, it suffices to prove the inequality for $x\in[0,\tfrac\pi 2]$.  Within this smaller interval, cosine is concave down with $\cos 0 = 1$ and $\cos\tfrac \pi 2 = 0$, so
		\begin{equation}\label{eqn:cos-ineq}
			\cos x \geq 1 - \frac{2x}{\pi}.
		\end{equation}
		Integrating both sides of \eqref{eqn:cos-ineq} yields \eqref{ineq-sinc} in this case.
		\item Now suppose $x\notin[0,\pi]$.  By symmetry, it suffices to prove the inequality for $x\geq \pi$.  In this interval, $1 - \tfrac{2x}{\pi} \leq -1 \leq \cos x$. It follows that \eqref{eqn:cos-ineq}, and thus \eqref{ineq-sinc}, holds as well.
	\end{itemize}
\end{proof}

We also need a variation of Bernoulli's Inequality which lower bounds $(1+x)^r$ by its second-order Taylor expansion.  If $r\geq 2$ is a positive integer, then certainly
\begin{equation}\label{eqn:lower-bound-taylor-series}
	(1+x)^r = \sum_{j=0}^r \binom rj x^j \geq 1 + rx + \binom r2 x^2\quad\text{whenever }x\geq 0.
\end{equation}
However, this argument does not work for general $r\in\mathbb R$, because the binomial coefficients $\binom rj$ may be negative.  Nevertheless, \eqref{eqn:lower-bound-taylor-series} still holds if $r$ is real.  We present a proof using Taylor expansions.  For a generalization of this inequality, consult
\cite{Gerber1968}.

\begin{prop}\label{prop:ineq-bernoulli}
	For all $x\geq 0$ and real $r \geq 2$, 
	\[
	(1+x)^r \geq 1 + rx + \binom r2 x^2.
	\]
\end{prop}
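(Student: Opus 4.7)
The plan is to reduce the inequality to an elementary calculus exercise. Define the difference function
\[
f(x) := (1+x)^r - 1 - rx - \binom{r}{2} x^2
\]
on $[0,\infty)$, and show $f(x) \geq 0$ there by two successive integrations of a nonnegativity statement about $f''$. This is the cleanest way I can see to use the Taylor expansion idea suggested in the text.

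First I would observe that $f(0) = 0$, and that
\[
f'(x) = r(1+x)^{r-1} - r - r(r-1)x,
\]
so also $f'(0) = r - r - 0 = 0$. Differentiating once more gives
\[
f''(x) = r(r-1)(1+x)^{r-2} - r(r-1) = r(r-1)\bigl[(1+x)^{r-2} - 1\bigr].
\]
This is where the hypothesis $r \geq 2$ enters decisively: the exponent $r-2$ is nonnegative, and $1 + x \geq 1$, so $(1+x)^{r-2} \geq 1$. Since also $r(r-1) \geq 2 > 0$, we conclude that $f''(x) \geq 0$ on $[0,\infty)$.

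Integrating once, $f'$ is nondecreasing on $[0,\infty)$ with $f'(0) = 0$, so $f'(x) \geq 0$; integrating again, $f$ is nondecreasing on $[0,\infty)$ with $f(0) = 0$, so $f(x) \geq 0$, which is exactly the desired inequality. There is no real obstacle here, only one minor subtlety to flag: because $r$ need not be an integer, the function $(1+x)^r$ is defined via the usual exponential, but since $1 + x \geq 1 > 0$ on our domain it is perfectly smooth and the standard power rule applies verbatim, so the calculus above is rigorous.
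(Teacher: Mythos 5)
Your proof is correct. It is close in spirit to the paper's argument, which also exploits Taylor expansion of $(1+t)^r$ around $t=0$, but the two differ in where the differentiation stops. The paper computes the \emph{third} derivative $f'''(t) = r(r-1)(r-2)(1+t)^{r-3}$, observes it is nonnegative for $r \geq 2$, and invokes Taylor's theorem with the explicit integral remainder $\int_0^x \tfrac{f'''(t)}{6}(x-t)^2\,dt \geq 0$. You instead work with the difference function $f$ and stop at the \emph{second} derivative: since $f(0) = f'(0) = 0$ and $f''(x) = r(r-1)\bigl[(1+x)^{r-2} - 1\bigr] \geq 0$ on $[0,\infty)$, two applications of ``nonnegative derivative implies nondecreasing'' give $f \geq 0$. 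The hypothesis $r \geq 2$ enters in both cases in essentially the same way (it makes the exponent $r-2$ nonnegative), but your route is slightly more elementary: it needs one fewer derivative and replaces the integral-form remainder with a pair of monotonicity arguments, which some readers may find easier to verify at a glance. The paper's version is more compact to state once Taylor's theorem is available. Either proof could be used here without loss.
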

\begin{proof}
	Let $f(t) \coloneqq (1+t)^r$.  Observe that $f'''(t) = r(r-1)(r-2)(1+t)^{r-3}$ is nonnegative on $(0,\infty)$.  It follows from Taylor's Theorem that
	\[
	(1+x)^r = 1 + rx + \binom r2 x^2 + \int_0^x \frac{f'''(t)}6(x-t)^2\,dt \geq 1 + rx + \binom r2 x^2.\qedhere
	\]
\end{proof}

\subsection{Bounding Geometric Series on $\{\Re w \geq 1\}$}

The main technical difficulty in our analysis of the roots of $\stern n\lm$ comes from estimating $(z)_n$.  This is significantly more difficult than it seems, especially for complex numbers $z$ of the form $1 + it$ where $t$ is small.  For example, it is tempting to write
\begin{equation}\label{eqn:false-ineq}
	|(z)_n| = \abs{\frac{z^n-1}{z-1}} \geq \frac{|z|^n - 1}{|z-1|} = \frac{(1+t^2)^{n/2} - 1}t.
\end{equation}
However, the right hand side of \eqref{eqn:false-ineq} is $\tfrac n2 t + \calO(t^2)$ as $t\to 0$, whereas $|(z)_n|$ tends to $n$ as $z\to 1$.

There seem to be few attempts to strengthen \eqref{eqn:false-ineq} in the literature.  The most relevant result we could find is Problem 4795 in the American Mathematical Monthly, reproduced below.  A proof appears in \cite{AMM1962}.

\begin{thm}\label{thm:mitrinovic}
	For all $z\in\C$ with $\Re z \geq 1$, and for all $a \geq 1$,
	\[
	|(z)_a| = |1+z+\cdots + z^{a-1}| \geq |z|^{a-1}.
	\]
\end{thm}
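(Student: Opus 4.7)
The plan is to pass to the variable $u = 1/z$. For $\Re z \geq 1$ one has $\Re u = \Re(z)/\abs{z}^2 \geq 1/\abs{z}^2 = \abs{u}^2$, which is equivalent to $\abs{u - 1/2} \leq 1/2$, so $u$ lies in the closed disk $D := \cball{1/2}{1/2}$. Dividing $(z)_a$ by $z^{a-1}$ converts the target inequality to $\abs{P_a(u)} \geq 1$ for all $u \in D$, where $P_a(u) := 1 + u + \cdots + u^{a-1}$. The zeros of $P_a$ are the nontrivial $a$-th roots of unity $\omega_k = e^{2\pi i k/a}$, $1 \leq k \leq a-1$, and each satisfies $\abs{\omega_k - 1/2}^2 = 5/4 - \cos(2\pi k/a) > 1/4$. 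Hence $1/P_a$ is holomorphic in a neighborhood of $D$, and by the maximum modulus principle applied to $1/P_a$ it suffices to verify $\abs{P_a(u)} \geq 1$ on the boundary circle $\partial D$.

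Next I would parametrize $\partial D$ by $u = \tfrac{1 + e^{i\phi}}{2} = e^{i\mu}\cos\mu$ with $\mu = \phi/2 \in [-\pi/2,\pi/2]$. A direct computation gives $1 - u = -ie^{i\mu}\sin\mu$, so $\abs{1 - u} = \abs{\sin\mu}$, and
\[
\abs{1 - u^a}^2 = 1 - 2\cos^a\mu\,\cos(a\mu) + \cos^{2a}\mu.
\]
Using $P_a(u) = (1 - u^a)/(1 - u)$, the inequality $\abs{P_a(u)} \geq 1$ on $\partial D$ translates to the purely trigonometric statement
\[
\cos^2\mu + \cos^{2a}\mu \;\geq\; 2\cos^a\mu\,\cos(a\mu), \qquad \mu \in [-\pi/2,\pi/2].
\]

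Finally I would view LHS $-$ RHS as a quadratic in $X = \cos^a\mu$, whose discriminant equals $4(\cos^2(a\mu) - \cos^2\mu)$. When $\abs{\cos(a\mu)} \leq \cos\mu$ the discriminant is nonpositive and the quadratic is nonnegative; when $\cos(a\mu) \leq -\cos\mu$ both roots of the quadratic are nonpositive while $X \geq 0$, so again the quadratic is nonnegative. The delicate case is $\cos(a\mu) > \cos\mu > 0$, which only occurs on the small intervals $\bigl(2k\pi/(a+1),\, 2k\pi/(a-1)\bigr)$ for $k \geq 1$ (in particular never for $a \leq 3$); there one must show $\cos^a\mu$ lies at or below the smaller root $X_- = \cos^2\mu/\bigl(\cos(a\mu) + \sqrt{\cos^2(a\mu) - \cos^2\mu}\bigr)$, equivalently $\cos^{a-2}\mu\bigl(\cos(a\mu) + \sqrt{\cos^2(a\mu) - \cos^2\mu}\bigr) \leq 1$. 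I expect this last uniform estimate, whose concise self-contained proof is given in \cite{AMM1962}, to be the main obstacle.
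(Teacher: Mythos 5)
The paper does not actually prove Theorem \ref{thm:mitrinovic}; it records the statement as Problem 4795 of the American Mathematical Monthly and refers entirely to \cite{AMM1962} for a proof. So there is no in-paper argument to compare against, and your proposal has to stand on its own. The reductions you set up are correct and cleanly executed: $u = 1/z$ maps $\{\Re z\ge 1\}$ into $\cball{1/2}{1/2}$ and $(z)_a/z^{a-1} = P_a(u)$; the zeros $\omega_k$ of $P_a$ satisfy $|\omega_k-\tfrac12|^2 = \tfrac54-\cos(2\pi k/a) > \tfrac14$, so $1/P_a$ is holomorphic on a neighbourhood of the disk; maximum modulus reduces the claim to the circle $\partial D$; the parametrization $u = e^{i\mu}\cos\mu$ gives $|1-u|=|\sin\mu|$ and converts the goal to $\cos^2\mu + \cos^{2a}\mu \ge 2\cos^a\mu\,\cos(a\mu)$; and the two easy cases $|\cos(a\mu)|\le\cos\mu$ and $\cos(a\mu)\le-\cos\mu$ are handled correctly.

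The third case, though, is a genuine gap, not a loose end. The passage from ``$\cos^a\mu\le X_-$'' to ``$\cos^{a-2}\mu(\cos(a\mu)+\sqrt{\cos^2(a\mu)-\cos^2\mu})\le 1$'' is, after clearing denominators and squaring back, \emph{algebraically equivalent} to the original inequality $\cos^2\mu+\cos^{2a}\mu\ge2\cos^a\mu\cos(a\mu)$ in the range $\cos(a\mu)>\cos\mu>0$; completing the square in $X=\cos^a\mu$ isolates where the difficulty lives but does not reduce it. And this remaining case is not small: the windows $(\tfrac{2k\pi}{a+1},\tfrac{2k\pi}{a-1})\cap(0,\tfrac\pi2)$ are nonempty for every $a\ge4$ and pile up as $a$ grows, and the obvious estimate $\cos(a\mu)+\sqrt{\cos^2(a\mu)-\cos^2\mu}\le 2$ is insufficient once $a$ is of moderate size, since $\cos^{a-2}\mu$ stays close to $1$ near the left endpoint of the $k=1$ window. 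You cannot close this by pointing to \cite{AMM1962}: that reference is a solution to the \emph{original} problem, not a proof of the specific trigonometric estimate your reduction produced, and using ``the theorem is true (per AMM)'' to justify a step that is equivalent to the theorem would make the argument circular. A new idea is still needed for the $\cos(a\mu)>\cos\mu>0$ range before this can be called a proof.
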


This estimate will be useful later in this paper, but it still is not tight for $z\approx 1$.  We now focus on finding a lower bound for $|(z)_n|$ that is tight near $1$.

We first show that it suffices to lower bound $|(z)_n|$ on the line $\Re w = 1$.

\begin{lemma}\label{lemma:ineq-push-left}
	Let $n\geq 1$ be an integer, and suppose $x$, $x_0$, and $y$ are real numbers with $x\geq x_0\geq 1$.  Then
	\[
	|(x + yi)_n| \geq |(x_0 + yi)_n|.
	\]
\end{lemma}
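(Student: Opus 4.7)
The natural approach is to factor $(z)_n$ completely over $\C$. Since $(z)_n = (z^n-1)/(z-1)$ is monic of degree $n-1$ and vanishes exactly at the non-trivial $n$th roots of unity $\omega_k := e^{2\pi i k/n}$ for $1 \leq k \leq n-1$, we get the product representation
\[
|(x+yi)_n| = \prod_{k=1}^{n-1} \bigl|(x + yi) - \omega_k\bigr|.
\]
The case $n = 1$ is trivial (both sides equal $1$), so I will assume $n \geq 2$ throughout.

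My plan is to show that each factor on the right is non-decreasing as a function of $x$ on the interval $[1,\infty)$ and then multiply. Writing $\omega_k = \cos\theta_k + i\sin\theta_k$ with $\theta_k := 2\pi k/n$, I would compute
\[
\bigl|(x+yi) - \omega_k\bigr|^2 = (x - \cos\theta_k)^2 + (y - \sin\theta_k)^2,
\]
whose partial derivative in $x$ equals $2(x - \cos\theta_k)$. This is non-negative exactly when $x \geq \cos\theta_k$. Because $\theta_k \in (0,2\pi)$ for each $k \in \{1,\ldots,n-1\}$, we have $\cos\theta_k < 1 \leq x_0 \leq x$, so each factor is non-decreasing on all of $[x_0,\infty)$. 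Multiplying over $k$ yields $|(x+yi)_n| \geq |(x_0+yi)_n|$.

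There is no real obstacle once the root factorization is in hand: the monotonicity reduces to the elementary observation that every zero of $(z)_n$ has real part strictly less than $1$, so moving the point $x+yi$ to the right in the half-plane $\{\Re w \geq 1\}$ can only push it farther from each $\omega_k$. A direct attack via the sum representation $1 + z + \cdots + z^{n-1}$, which would amount to showing $\Re\bigl((z)_n' \,\overline{(z)_n}\bigr) \geq 0$ pointwise, is possible but substantially messier; the geometric viewpoint via the roots of unity makes the claim transparent.
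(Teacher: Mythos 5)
Your proof is correct and follows essentially the same route as the paper: factor $(z)_n$ over the nontrivial $n$th roots of unity and observe that each root has real part at most $1$, so every factor $|z - e^{2\pi i k/n}|$ is non-decreasing in $x$ on $[1,\infty)$. The only cosmetic difference is that you verify the factor-wise monotonicity via a derivative computation, whereas the paper asserts it directly from $\Re(e^{2\pi i k/n}) \leq 1$ and $x \geq x_0$.
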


\begin{proof}
	Both sides equal $1$ when $n = 1$, so assume $n\geq 2$.  Let $z = x + yi$ and $z_0 \coloneqq x_0 + yi$ for ease of readability.  Observe that
	\[
	|(z)_n| = |1 + z + z^2 + \cdots + z^{n-1}| = \prod_{j=1}^{n-1}|z - e^{2\pi ij/n}|
	\] 
	and
	\[
	|(z_0)_n| = |1 + z_0 + z_0^2 + \cdots + z_0^{n-1}| = \prod_{j=1}^{n-1}|z_0 - e^{2\pi ij/n}|.
	\]
	For each $j$, because $\Re(e^{2\pi ij/n}) \leq 1$ and $x\geq x_0$, we have $|z - e^{2\pi ij/n}| \geq |z_0 - e^{2\pi ij/n}|$.  Multiplying all $n-1$ such inequalities together yields
	\[
	|1 + z + z^2 + \cdots + z^{n-1}| \geq |1 + z_0 + z_0^2 + \cdots + z_0^{n-1}|.\qedhere
	\]
\end{proof}

We handle small values of $n$ numerically.  Observe that $(1+it)_n$ is a polynomial in $t$, so
\begin{align*}
	|(1+it)_n|^2 &= (1+it)_n(1-it)_n = \frac{(1+it)^n - 1}{it}\cdot \frac{(1-it)^n - 1}{-it}\\
	&= \frac{(1+t^2)^n - 2\Re(1+it)^n + 1}{t^2}
\end{align*} can be written in the form $P_n(t^2)$ for some polynomial $P_n$.  This means we may manually compute $P_n$ for small $n$ and use standard calculus techniques to numerically bound their minima.  These computations are shown in Table \ref{table:min-polys} for $1\leq n \leq 14$.

\begin{table}[!h]
	\centering
	{\fontsize{11pt}{11pt}\selectfont
		\begin{tblr}{Q[c,t]|Q[c,m]|Q[c,b]} 
			$n$ & $P_n(x)$ & $\sqrt{\min_{x\geq 0} P_n(x)}$ \\ \hline
			1 & 1 & 1\\ \hline
			2 & $x+4$ & 2 \\ \hline
			3 & $x^2+3 x+9$ & 3 \\ \hline
			4 & $x^3+4 x^2+4 x+16$ & 4 \\ \hline
			5 & $x^4+5 x^3+10 x^2+25$ & 5\\ \hline
			6 & $x^5+6 x^4+15 x^3+22 x^2-15 x+36$ & 5.8206 \\ \hline
			7 & $x^6+7 x^5+21 x^4+35 x^3+49 x^2-49 x+49$ & 6.3003 \\ \hline
			8 & $x^7+8 x^6+28 x^5+56 x^4+68 x^3+112 x^2-112 x+64$ & 6.5136\\ \hline
			9 & {$x^8+9 x^7+36 x^6+84 x^5+126 x^4$ \\ $ +108 x^3+252 x^2-216 x+81$ } & 6.5474\\ \hline
			10 & {$x^9+10 x^8+45 x^7+120 x^6+210 x^5$ \\ $+254 x^4+120 x^3+540 x^2-375 x+100$ } & 6.4727 \\ \hline
			11 & {$x^{10}+11 x^9+55 x^8+165 x^7+330 x^6$ \\ $+462 x^5+484 x^4+1089 x^2-605 x+121$} & 6.3388 \\ \hline
			12 & {$x^{11}+12 x^{10}+66 x^9+220 x^8+495 x^7+792 x^6$ \\ $+922 x^5+924 x^4-495 x^3+2068 x^2-924 x+144$ } & 6.1771\\ \hline
			13 & { $x^{12}+13 x^{11}+78 x^{10}+286 x^9 +715 x^8$ \\ $+1287 x^7+1716 x^6+1690 x^5 +1859 x^4$ \\ $-1859 x^3+3718 x^2-1352 x+169$ } & 6.0096 \\ \hline
			14 & {$x^{13}+14 x^{12}+91 x^{11}+364 x^{10}+1001 x^9$ \\ $+2002 x^8+3003 x^7 +3434 x^6+2821 x^5$ \\ $+4004 x^4  -5005 x^3+6370
				x^2-1911 x+196$ } & 5.8386
		\end{tblr}
	}
	\caption{\small{Polynomials $P_n(x)$ as well as their minima on $[0,\infty)$.}}
	\label{table:min-polys}
\end{table}

Unfortunately, these explicit polynomials are hard to analyze for large $n$.  (In particular, observe that the sequence of minima is not monotonic, instead peaking at $n = 9$.)  This means we need a fundamentally different approach to bound $P_n$ in general.

The key estimate for us is the following inequality.

\begin{thm}\label{thm:MO-ineq}
	For all real numbers $t$ and all positive integers $n$,
	\begin{equation}\label{eqn:tao-ineq}
		\abs{\left(1+\frac{it}n\right)^n - 1} \geq \big|e^{it} - 1\big| = 2\sin\frac t2.
	\end{equation}
\end{thm}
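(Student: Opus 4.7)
The plan is to prove the inequality pointwise on each factor in the polynomial factorization $z^n - 1 = \prod_{k=0}^{n-1}(z - \zeta_k)$, where $\zeta_k = e^{2\pi i k/n}$ are the $n$-th roots of unity. Applying this identity to $z = 1 + it/n$ and to $z = e^{it/n}$ gives
\[
\left(1 + \tfrac{it}{n}\right)^n - 1 \;=\; \prod_{k=0}^{n-1}\!\bigl((1 + \tfrac{it}{n}) - \zeta_k\bigr), \qquad e^{it} - 1 \;=\; \prod_{k=0}^{n-1}\!\bigl(e^{it/n} - \zeta_k\bigr),
\]
so the theorem will follow if I can establish, for every $u \in \R$ and every $\zeta \in \C$ with $|\zeta| = 1$, the pointwise estimate
\[
|(1 + iu) - \zeta|^2 \;\geq\; |e^{iu} - \zeta|^2,
\]
applied with $u = t/n$ and $\zeta = \zeta_k$, and then multiplied together across $k = 0, 1, \ldots, n-1$.

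For the pointwise estimate I would write $\zeta = \cos\theta + i\sin\theta$ and expand both sides in real/imaginary parts, obtaining
\[
|(1 + iu) - \zeta|^2 - |e^{iu} - \zeta|^2 \;=\; u^2 - 2u\sin\theta + 2\cos(u - \theta) - 2\cos\theta \;=:\; f(u).
\]
A direct check shows $f(0) = 0$ (the two cosine terms cancel using $\cos(-\theta) = \cos\theta$), and $f'(0) = -2\sin\theta - 2\sin(-\theta) = 0$. Finally $f''(u) = 2 - 2\cos(u - \theta) \geq 0$ for every $u$, so $f$ is convex on all of $\R$. A convex function that vanishes together with its derivative at a single point is non-negative everywhere, which gives the pointwise estimate at once.

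I do not anticipate serious obstacles in this approach; the computation of $f, f', f''$ is essentially mechanical. The one verification worth flagging is the $\zeta_0 = 1$ factor, corresponding to $\theta = 0$: here the pointwise inequality reduces to $u^2 \geq 4\sin^2(u/2)$, i.e.\ $|u| \geq 2|\sin(u/2)|$, which is exactly the $n = 1$ case of the theorem and follows from $|\sin x| \leq |x|$. For every other $\zeta_k$, the convexity argument above applies verbatim, and multiplying the $n$ resulting inequalities factor by factor closes the proof.
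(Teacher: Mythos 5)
Your proof is correct, and it takes a genuinely different—and substantially shorter—route than the paper.  The paper reproduces Tao's argument: write $(1+it/n)^n = r e^{i(t-\eps)}$ in polar form, reduce to a real inequality involving $r$ and $\eps$, and prove it by a three-way case split on $t$ (with $n=1,2$ checked separately).  Your proof instead factors $z^n-1 = \prod_{k=0}^{n-1}(z-\zeta_k)$ over the $n$-th roots of unity, evaluates at $z = 1+it/n$ and $z = e^{it/n}$, and reduces the theorem to the pointwise comparison $|(1+iu)-\zeta| \geq |e^{iu}-\zeta|$ for each root, with $u = t/n$.  Expanding both sides you obtain $f(u) = u^2 - 2u\sin\theta - 2\cos\theta + 2\cos(u-\theta)$, and the three facts $f(0)=0$, $f'(0) = -2\sin\theta + 2\sin\theta = 0$, and $f''(u) = 2 - 2\cos(u-\theta) \geq 0$ give $f \geq 0$ by convexity; multiplying the $n$ nonnegative factor inequalities closes the argument.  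I verified the expansion: $|(1+iu)-\zeta|^2 = 2-2\cos\theta + u^2 - 2u\sin\theta$ and $|e^{iu}-\zeta|^2 = 2-2\cos(u-\theta)$, so the formula for $f$ is right.  Compared with Tao's proof, yours avoids polar coordinates, the $\eps$-error analysis, and all casework; it also isolates the source of the inequality as a local convexity phenomenon near $u=0$, and it shows immediately that equality holds only at $t=0$ (since each $f$ is strictly convex except on a set of measure zero).  What the paper's proof offers instead is explicit quantitative information on how the gap between the two sides grows in $t$ and $n$, which could matter if one later wanted sharper estimates; your proof is cleaner but gives only the sign.
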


Carlo Beenakker conjectured Theorem \ref{thm:MO-ineq} in response to a question by the author on MathOverflow \cite{tao-MO}, and Terence Tao later confirmed this conjecture in the same thread. To make this paper self-contained, and to record the proof of this inequality in print, we replicate Tao's argument below with some modifications in exposition.

\begin{proof}[Proof (Tao, \cite{tao-MO})]
	The proof proceeds in four steps.  The first step rewrites \eqref{eqn:tao-ineq} into polar coordinate form to obtain the equivalent inequality \eqref{eqn:main-ineq}.  The next three steps prove \eqref{eqn:main-ineq} by performing casework on different ranges of $t$.
	
	\paragraph{\textbf{Step 1: Setup.}} For $n = 1$, observe that 
	\begin{equation}\label{eqn:case-n-1}
		|e^{it} - 1| = 2\left|\sin \frac t2\right| \leq 2\cdot \frac {|t|}2 = |t|.
	\end{equation}
	For $n = 2$, Theorem \ref{thm:MO-ineq} holds by the calculation
	\[
	\abs{\left(1 + \frac{it}2\right)^2 - 1} = \sqrt{t^2 + \left(\frac{t^2}4\right)^2} \geq |t|\geq |e^{it} - 1|,
	\]
	where the last inequality is due to \eqref{eqn:case-n-1}.  (This shows that, interestingly, the left hand side of \eqref{eqn:tao-ineq} is \textit{not} always decreasing in $n$, despite converging to $|e^{it} - 1|$ in the limit as $n\to\infty$.) In what follows, assume $n\geq 3$.
	
	\par Without loss of generality assume $t\geq 0$.  We may take advantage of the estimate $(1 + \tfrac{it}{n})^n\approx e^{it}$ by writing $(1+\tfrac{it}n)^n = re^{i(t-\eps)}$, where 
	\[
	r\coloneqq \left(1 + \frac{t^2}{n^2}\right)^{n/2}\quad\text{and}\quad \eps \coloneqq t - n\arctan \frac tn.
	\]
	We expect that $\eps$ should be relatively small.  Indeed, upon integrating the inequalities $1-x^2 \leq \tfrac{1}{1+x^2}\leq 1$ (true for all $x$) we have
	$x - \frac{x^3}3 \leq \arctan x \leq x$ for all $x\geq 0$.  It follows that
	\begin{equation}\label{eqn:bound-eps}
		0\leq \eps \leq \frac{t^3}{3n^2}.
	\end{equation}
	
	\par To finish the setup, observe that squaring both sides of \eqref{eqn:tao-ineq} yields the equivalent inequality
	\[
	r^2 - 2r\cos(t - \eps) + 1 \geq 2 - 2\cos t.
	\]
	This rearranges to 
	\begin{equation}\label{eqn:main-ineq}
		(r-1)^2 + 2(r-1)\Big(1 - \cos(t-\eps)\Big) \geq 2\Big(\cos(t-\eps) - \cos t\Big).
	\end{equation}
	The rest of the proof will establish \eqref{eqn:main-ineq} by splitting into cases based on the value of $t$.
	
	\paragraph{\textbf{Step 2: $\boldsymbol{t > \tfrac 83}$.}} In this region, $r-1$ is large enough that we expect the first term $(r-1)^2$ to dominate on its own.  By Bernoulli's Inequality, we make the estimate
	\[
	r - 1 = \left(1 + \frac{t^2}{n^2}\right)^{n/2} - 1 \geq \frac n2 \cdot \frac{t^2}{n^2} = \frac{t^2}{2n}.
	\]
	Furthermore, applying the Mean Value Theorem to $x\mapsto \cos x$ yields
	\[
	\cos(t-\eps) - \cos t \leq \eps \leq \frac{t^3}{3n^2}.
	\]
	It follows that $(r-1)^2 \geq 2(\cos(t-\eps) - \cos t)$ when
	\[
	\left(\frac{t^2}{2n}\right)^2 \geq 2\cdot \frac{t^3}{3n^2},
	\]
	or, equivalently, when $t > \frac 83$.
	
	\paragraph{\textbf{Step 3: $\boldsymbol{\tfrac \pi 2 < t \leq \tfrac 83}$.}}
	Within this range, we instead show that the second term, namely $2(r-1)(1-\cos(t-\eps))$, dominates.  Here, $\cos t\leq 0$ and
	\[
	1 - \cos(t-\eps) \geq 1 - \eps \geq 1 - \frac{t^3}{3n^2}.
	\] 
	It follows that \eqref{eqn:main-ineq} will be satisfied (just using the second term) if
	\[
	2\cdot \frac n2 \cdot \frac{t^2}{n^2}\left(1 - \frac{t^3}{3n^2}\right) \geq 2\frac{t^3}{3n^2},
	\]
	which simplifies to
	\[
	\frac 23t + \frac{t^3}{3n^2} \leq n.
	\]
	It remains to check this inequality for $n\geq 3$ and $t\leq \tfrac 83$.  This follows from the computation
	\[
	\frac 23t + \frac{t^3}{3n^2} \leq \frac 23\cdot \frac 83 + \frac{(8/3)^3}{3\cdot 3^2} = \frac{1808}{729} < 3.
	\]
	
	\paragraph{\textbf{Step 4: $\boldsymbol{0\leq t \leq \tfrac \pi 2}$.}} We finish with the most delicate case of the proof.  Rewrite \eqref{eqn:main-ineq} slightly as
	\begin{equation}\label{eqn:modified}
		(r-1)^2 + 2(r-1)(1-\cos t) \geq 2r\Big(\cos(t-\eps) - \cos t\Big).
	\end{equation}
	We do this to take advantage of concavity of cosine in this region.  Indeed, because $t$ and $t-\eps = n\arctan \tfrac tn$ are both in the interval $[0,\tfrac \pi 2]$, 
	\[
	\cos(t-\eps) - \cos t \leq \eps\sin t.
	\]
	This yields a crucial extra power of $t$ in the limit as $t\to 0^+$.
	
	\par Now observe that
	\[
	\frac{1 - \cos t}{\sin t} = \tan \frac t2 \geq \frac t2.
	\]
	Thus, after dividing through by $\sin t$, it suffices to establish the bound
	\[
	2(r-1) \frac t2 \geq 2r\eps
	\]
	(again taking only the second term in the left hand side of \eqref{eqn:modified}).  From our previous inequalities it would suffice to show that
	\[
	2\cdot \frac n2 \cdot \frac{t^2}{n^2}\cdot \frac t2 \geq 2r\frac{t^3}{3n^2},
	\]
	or $r\leq \frac 34n$.  Magically, all powers of $t$ have canceled out.  
	
	\par To finish this case, we make the bound
	\[
	r\leq \left(1 + \frac{(\pi/2)^2}{n^2}\right)^{n/2} \leq \exp\left(\frac{(\pi/2)^2}{n^2}\frac n2\right) = \exp\left(\frac{\pi^2}{8n}\right).
	\]
	It remains to show $\exp(\tfrac{\pi^2}{8n})\leq \tfrac 34n$ for all $n\geq 3$.  Indeed, the left-hand side is decreasing in $n$ while the right-hand side is increasing, and at $n = 3$ the inequality becomes
	\[
	\exp\left(\frac{\pi^2}{24}\right) \leq \frac 94.
	\]
	A quick calculation reveals 
	\[
	\exp\left(\frac{\pi^2}{24}\right) < \exp\left(\frac12\right) = \sqrt{e} < 2 < \frac{9}{4},
	\]
	so $\exp(\tfrac{\pi^2}{8n}) \leq \tfrac 34n$ holds for all $n\geq 3$.
\end{proof}

\subsection{Bounding Geometric Series on $\calB$}

Recall that $\calB$ is the disk $\{|w-2| \leq 1\}\subseteq\C$.  While Theorem \ref{thm:MO-ineq} is strong in its own right, it is not quite sufficient to show
Theorem \ref{thm:prime-poly-irred}.  We opt to establish a separate lower bound for $|(w)_n|$
when $w\in\calB$.  This bound is not the strongest possible (see the remark before Proposition \ref{prop:ineq-smallcases}), but
it is sufficient for our purposes.

\begin{thm}\label{thm:min-geom-series}
	Let $n$ be a positive integer, and suppose $z\in\calB$.  Then $|(z)_n| \geq \min(n,\tfrac{11}2)$.
\end{thm}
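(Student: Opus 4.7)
My plan is to split the analysis on the size of $n$, using different tools in the small-$n$ and large-$n$ regimes.

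A preliminary reduction is provided by Lemma \ref{lemma:ineq-push-left}: since $\Re z \geq 1$ throughout $\calB$, and since for each fixed $y$ the smallest admissible $x$ with $x+iy\in\calB$ is $x_0=2-\sqrt{1-y^2}\geq 1$, the lemma shows that the minimum of $|(z)_n|$ on $\calB$ is attained on the left boundary arc $\partial_L\calB=\{2-e^{i\theta}:\theta\in[-\pi/2,\pi/2]\}$. By conjugation symmetry I restrict to $\theta\in[0,\pi/2]$.

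For $n\leq 14$, I would apply Lemma \ref{lemma:ineq-push-left} a second time to push further from $z=2-e^{i\theta}$ down to $1+i\,\Im z$, yielding $|(z)_n|\geq|(1+iy)_n|$ for some $|y|\leq 1$. Since $|(1+iy)_n|^2=P_n(y^2)$, the conclusion follows by direct inspection of Table \ref{table:min-polys}: for $n\leq 5$ each $P_n$ is non-decreasing on $[0,\infty)$ (check that $P_n'$ has no root in $(0,\infty)$), so $P_n(y^2)\geq P_n(0)=n^2$ and hence $|(z)_n|\geq n$; for $6\leq n\leq 14$ every tabulated value of $\sqrt{\min P_n}$ comfortably exceeds $11/2$.

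For $n\geq 15$ this double-push becomes too lossy (along the line $\Re w=1$ one can show $|(1+iy)_n|\to\pi<11/2$ as $n\to\infty$ with $y\to 2\pi/n$), so I would analyze $|(z)_n|$ on $\partial_L\calB$ directly. Writing $z=\rho e^{i\phi}$ with $\rho^2=1+8\sin^2(\theta/2)$, we have
\[
|(z)_n|^2=\frac{(\rho^n-1)^2+4\rho^n\sin^2(n\phi/2)}{4\sin^2(\theta/2)}.
\]
When $\sin(\theta/2)\geq 11/(4n)$, Proposition \ref{prop:ineq-bernoulli} applied to $\rho^n=(1+8\sin^2(\theta/2))^{n/2}$ yields $\rho^n-1\geq 4n\sin^2(\theta/2)$, so the first term alone gives $|(z)_n|^2\geq 4n^2\sin^2(\theta/2)\geq(11/2)^2$. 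When $\sin(\theta/2)<11/(4n)$, the relation $\tan\phi=-\sin\theta/(2-\cos\theta)$ provides the expansion $\phi=-\theta+O(\theta^3)$; combining this with Theorem \ref{thm:MO-ineq} (applied with $t=-n\theta$ to compare $z^n$ with $(1-i\theta)^n$) and Proposition \ref{prop:sinc-ineq} should give enough control over $\sin(n\phi/2)/\sin(\theta/2)$ that the sum of the two terms remains $\geq(11/2)^2$.

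The main obstacle is this last sub-case for $n$ roughly $\geq 18$: the Taylor expansion
\[
|(z)_n|^2=n^2+\frac{n^2(n-1)(11-n)}{12}\theta^2+O(\theta^3)
\]
shows $|(z)_n|^2$ decreasing from $n^2$ with a coefficient that grows cubically negative, while the second term vanishes near the resonance $\theta=2\pi/n$, which sits dangerously close to the sub-case boundary $\theta\approx 11/(2n)$. The argument is feasible because the asymptotic floor of $|(z)_n|$ on $\partial_L\calB$ is $2\pi\approx 6.28$, achieved precisely at $\theta=2\pi/n$; this leaves substantial room above the target $11/2$ for a careful quantitative combination of both terms in the polar identity.
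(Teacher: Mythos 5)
Your reduction to the left boundary arc via Lemma~\ref{lemma:ineq-push-left}, your treatment of $n \leq 14$ via Table~\ref{table:min-polys} (including the monotonicity check on $P_n$ for $n \leq 5$), and your first sub-case $\sin(\theta/2) \geq 11/(4n)$ are all correct and cover the same ground as the paper: that sub-case is exactly the magnitude-only bound of Proposition~\ref{prop:simple-ineq}, and you rightly observe that the resonance near $\theta = 2\pi/n$, where $|(z)_n|$ dips toward $2\pi$, falls inside it.

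The genuine gap is the second sub-case, $\sin(\theta/2) < 11/(4n)$. You write that Theorem~\ref{thm:MO-ineq} together with Proposition~\ref{prop:sinc-ineq} ``should give enough control over $\sin(n\phi/2)/\sin(\theta/2)$,'' but you do not produce the estimate, and it does not come for free. The natural way to deploy those two tools is to first push left to $\Re w = 1$, which gives exactly the paper's Proposition~\ref{prop:linear-near-zero}: $|(z)_n| \geq n\bigl(1 - \tfrac{t}{2\pi}\bigr)$ with $t = n\theta$. That bound drops below $\tfrac{11}{2}$ once $t > 2\pi\bigl(1 - \tfrac{11}{2n}\bigr)$; for $n = 15$ this threshold is about $3.98$, whereas your first sub-case only begins near $t \approx 5.53$. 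So for $n = 15$ the window $t \in (3.98, 5.53)$ is covered by neither estimate you cite, and your own Taylor expansion shows the coefficient structure degrades precisely there. Making the phase term $\rho^n \sin^2(n\phi/2)/\sin^2(\theta/2)$ rigorous on this window would require a uniform two-sided comparison of $|\phi|$ with $\theta$ (not just the local expansion $\phi = -\theta + O(\theta^3)$), and that step is absent.

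The paper closes this window differently: Proposition~\ref{prop:tighter-ineq} applies the \emph{second-order} Bernoulli estimate (Proposition~\ref{prop:ineq-bernoulli}) to the magnitude term alone while staying on the boundary arc, so that $\rho > 1$ is exploited rather than discarded. The resulting bound $t + t^3\bigl(\tfrac{1}{2n}-\tfrac{25}{24n^2}\bigr) - t^5\bigl(\tfrac{1}{8n^3}-\tfrac{1}{16n^4}\bigr)$ is increasing in $t$, and evaluated at the crossover $t = M = 2\pi(1 - \tfrac{11}{2n})$ it becomes a single-variable polynomial $P(1/n)$, which one verifies exceeds $\tfrac{11}{2}$ on $[0, \tfrac{1}{15}]$. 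To complete your argument you would need either an analogue of this refined magnitude bound or an actual quantitative lower bound for the phase contribution on the transition window; the proposal as written asserts feasibility but does not prove it.
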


For all $n\in\mathbb N$ and $t\in(0,n\pi]$, let 
\[
w_{n,t} \coloneqq \overline{2 - e^{it/n}} = \left(2-\cos\frac{ t}n\right) + i \sin\frac{t}n.
\]  
Just as in the proof of Theorem \ref{thm:MO-ineq}, we aim to take advantage of the approximation $w_{n,t}\approx e^{it}$ for large $n$.  The next few propositions will establish several lower bounds for $|(w_{n,t})_n|$.

\begin{prop}\label{prop:linear-near-zero}
	For integer $n\geq 1$ and real $t\geq 0$, we have
	\[
	\abs{\frac{w_{n,t}^n - 1}{w_{n,t} - 1}} \geq n\left(1 - \frac t{2\pi}\right).
	\]
\end{prop}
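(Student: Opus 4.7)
The plan is to bootstrap Theorem \ref{thm:MO-ineq} to our setting in two moves: first, use Lemma \ref{lemma:ineq-push-left} to reduce the problem to an estimate on the vertical line $\Re z = 1$, and then close the remaining gap with Proposition \ref{prop:sinc-ineq}. The key observation is that $\Re w_{n,t} = 2 - \cos(t/n) \geq 1$, so Lemma \ref{lemma:ineq-push-left} yields $|(w_{n,t})_n| \geq |(1 + i\sin(t/n))_n|$. Since $(1 + iy)_n = ((1 + iy)^n - 1)/(iy)$, the right-hand side equals $|(1 + i\sin(t/n))^n - 1|/\sin(t/n)$, and this is exactly the quantity governed by Theorem \ref{thm:MO-ineq}, applied with the theorem's ``$t$'' taken to be $s = n\sin(t/n)$. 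Combining these steps yields
\[
|(w_{n,t})_n| \geq \frac{2\,|\sin(n\sin(t/n)/2)|}{\sin(t/n)}.
\]

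\par From here, the finishing move is Proposition \ref{prop:sinc-ineq}, used twice. A first application, in the form $\sin x \leq x$, gives $n\sin(t/n)/2 \leq t/2$; restricting to $t \in [0, 2\pi]$ (which costs nothing, since the right-hand side of the claim is non-positive for $t > 2\pi$) forces this argument to lie in $[0, \pi]$, so the sine above is non-negative. A second application, in the form $\sin x \geq x - x^2/\pi$, turns the bound into $n(1 - n\sin(t/n)/(2\pi))$, and one more use of $\sin(t/n) \leq t/n$ replaces this with $n(1 - t/(2\pi))$.

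\par The main obstacle is not really algebraic but rather logistical: keeping track of the regimes in which each tool applies. Theorem \ref{thm:MO-ineq} is sharpest when $n\sin(t/n)/2$ is small, while Proposition \ref{prop:sinc-ineq} provides a useful lower bound on $\sin x$ only when $x \in [0, \pi]$, and the restriction $t \leq 2\pi$ is exactly what makes these constraints compatible. The remaining edge cases --- $n = 1$, or $t/n$ an integer multiple of $\pi$ so that $\sin(t/n) = 0$ --- follow at once from the trivial observation that $w_{n,t}$ is then real and at least $1$, so $|(w_{n,t})_n| \geq (1)_n = n$.
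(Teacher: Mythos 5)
Your proof is correct and follows essentially the same path as the paper's: reduce to $\Re z = 1$ via Lemma \ref{lemma:ineq-push-left}, apply Theorem \ref{thm:MO-ineq} with $s = n\sin(t/n)$, then finish with the sine bounds $\sin x \leq x$ and $\sin x \geq x - x^2/\pi$ (equivalently $\tfrac{\sin x}{x} \geq 1 - \tfrac{x}{\pi}$). If anything you are slightly more careful than the paper about the degenerate cases $n = 1$, $\sin(t/n) = 0$, and $t > 2\pi$, which the paper passes over silently.
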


\begin{proof}
	By Lemma \ref{lemma:ineq-push-left}, because $\Im w_{n,t} = \sin\tfrac tn$ and $\Re w_{n,t} \geq 1$, we know
	\[
	\abs{\frac{w_{n,t}^n - 1}{w_{n,t} - 1}} \geq \abs{\frac{(1 + i\sin \frac tn)^n - 1}{(1 + i\sin \frac tn) - 1}} = \frac{\abs{(1 + i\sin \frac tn)^n - 1}}{\sin\frac tn}.
	\]
	Now write $\sin \frac tn = \tfrac 1n\cdot n\sin\tfrac tn$ and apply Lemma \ref{thm:MO-ineq} to obtain 
	\begin{align*}
		\abs{\left(1 + i\sin \frac tn\right)^n - 1} &= \abs{\left(1 + \frac in\cdot n\sin \frac tn\right)^n-1} \\
		&\geq \abs{e^{in\sin\frac tn} - 1} = 2\sin\left(\frac n2\sin\frac tn\right).
	\end{align*}
	Finally, using the inequalities $\tfrac{\sin x}x \geq 1 - \frac x\pi$ and $\sin x \leq x$, valid for $x > 0$, we obtain 
	\begin{align*}
		\frac{\abs{(1 + i\sin \frac tn)^n - 1}}{\sin\frac tn} &\geq \frac{2\sin(\frac n2\sin\frac tn)}{\sin\frac tn}
		= n\cdot\frac{\sin(\frac n2\sin \frac tn)}{\frac n2\sin\frac tn}\\
		&\geq n\left(1 - \frac{\frac n2\sin\frac tn}{\pi}\right)
		\geq n\left(1 - \frac t{2\pi}\right). \qedhere
	\end{align*}
\end{proof}

The second bound concerns values of $t$ away from $0$.

\begin{prop}\label{prop:simple-ineq}
	For all $n\geq 4$ and $t\geq 0$, we have
	\[
	\abs{\frac{w_{n,t}^n - 1}{w_{n,t} - 1}} \geq 2n\sin\frac t{2n}.
	\]
\end{prop}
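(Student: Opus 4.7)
The plan is to reduce the claimed inequality to a purely real one via the reverse triangle inequality, and then close the gap with the quadratic Bernoulli-type estimate of Proposition \ref{prop:ineq-bernoulli}. For brevity write $s \coloneqq \sin\frac{t}{2n}$, which lies in $[0,1]$ when $t \in (0, n\pi]$.

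First I would carry out two elementary computations, both based on the half-angle identity $\cos\frac{t}{n} = 1 - 2s^2$:
\[
|w_{n,t} - 1|^2 = 2 - 2\cos\tfrac{t}{n} = 4s^2, \qquad |w_{n,t}|^2 = 5 - 4\cos\tfrac{t}{n} = 1 + 8s^2.
\]
Thus $|w_{n,t} - 1| = 2s$ and, crucially, $|w_{n,t}| \geq 1$. The reverse triangle inequality applied to $z = w_{n,t}^n$ then yields
\[
|w_{n,t}^n - 1| \geq |w_{n,t}|^n - 1 = (1 + 8s^2)^{n/2} - 1.
\]

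Since $n \geq 4$, the exponent $r = n/2 \geq 2$, so Proposition \ref{prop:ineq-bernoulli} applies to give
\[
(1 + 8s^2)^{n/2} \geq 1 + \tfrac{n}{2}(8s^2) + \tbinom{n/2}{2}(8s^2)^2 \geq 1 + 4ns^2.
\]
Dividing by $|w_{n,t} - 1| = 2s$ (the $s = 0$ case being trivial) produces exactly the desired bound $2ns = 2n\sin\frac{t}{2n}$.

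The one place to worry is the reverse triangle inequality $|z - 1| \geq |z| - 1$, which discards all information about $\arg(w_{n,t}^n)$ and might appear too lossy. It succeeds here because the target $4ns^2$ for $|w_{n,t}^n - 1|$ scales quadratically in $s$, and Proposition \ref{prop:ineq-bernoulli} is strong enough to extract precisely this second-order behavior of $(1+8s^2)^{n/2} - 1$ near $s = 0$. The hypothesis $n \geq 4$ is exactly what is needed to apply Proposition \ref{prop:ineq-bernoulli} (i.e.\ to guarantee $r \geq 2$); small $n$ are presumably covered by Proposition \ref{prop:linear-near-zero} or by direct inspection.
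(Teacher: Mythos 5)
Your proof is correct and follows the same route as the paper: the identities $|w_{n,t}-1| = 2\sin\tfrac{t}{2n}$ and $|w_{n,t}|^2 = 1 + 8\sin^2\tfrac{t}{2n}$, the reverse triangle inequality, and then a Bernoulli-type bound on $(1+8s^2)^{n/2}$. The one small difference is that the paper invokes only the ordinary first-order Bernoulli inequality $(1+x)^{n/2} \geq 1 + \tfrac{n}{2}x$ (valid for $n/2 \geq 1$), which already yields $1 + 4n\sin^2\tfrac{t}{2n}$, whereas you appeal to the stronger Proposition~\ref{prop:ineq-bernoulli} and then discard its quadratic term. Consequently your concluding remark that ``$n \geq 4$ is exactly what is needed to apply Proposition~\ref{prop:ineq-bernoulli}'' slightly overstates the constraint: the argument goes through for $n \geq 2$, and the $n \geq 4$ hypothesis is not forced by this proof. (Proposition~\ref{prop:ineq-bernoulli} is genuinely needed in Proposition~\ref{prop:tighter-ineq}, where the quadratic term is retained, but not here.)
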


\begin{proof}
	Because $w_{n,t} = (2-\cos\tfrac tn) + i\sin\tfrac tn$, compute
	\begin{align*}
		\abs{w_n}^2 &= \left(2 - \cos\frac tn\right)^2 + \sin^2\frac tn\\
		&= 5 - 4\cos\frac tn
		= 1 + 8\sin^2 \frac{t}{2n}.
	\end{align*}
	It follows by the Triangle Inequality and $\sin x \geq x - \tfrac{x^3}6$ that
	\begin{align*}
		\abs{\frac{w_{n,t}^n - 1}{w_{n,t} - 1}} &\geq \frac{\abs{w_{n,t}}^n - 1}{\abs{w_{n,t} - 1}} 
		= \frac{(1 + 8\sin^2\frac t{2n})^{n/2} - 1}{2\sin\frac t{2n}}.
	\end{align*}
	Finally, Bernoulli's Inequality yields
	\[
	\frac{(1 + 8\sin^2\frac t{2n})^{n/2} - 1}{2\sin\frac t{2n}} \geq
	\frac{\big(1 + 4n\sin^2\frac t{2n}\big) - 1}{2\sin\frac t{2n}} = 2n\sin\frac t{2n}. \qedhere
	\]
\end{proof}
Our third inequality is a slight tightening of Proposition \ref{prop:simple-ineq} as $t\to 0$.

\begin{prop}\label{prop:tighter-ineq}
	For all $n$ and $t\in[0,2\pi]$, we have
	\begin{equation}\label{eqn:lower-bound-large-t}
		\abs{\frac{w_{n,t}^n - 1}{w_{n,t} - 1}} \geq t + t^3\left(\frac{1}{2n} - \frac{25}{24n^2}\right) - t^5\left(\frac{1}{8n^3} - \frac{1}{16n^4}\right).
	\end{equation}
\end{prop}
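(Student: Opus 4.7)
The plan is to parallel the proof of Proposition \ref{prop:simple-ineq}, but keep the second-order term of the sharpened Bernoulli bound (Proposition \ref{prop:ineq-bernoulli}) that was discarded there. For $n \geq 4$, I begin with the triangle-inequality estimate $|w_{n,t}^n - 1| \geq |w_{n,t}|^n - 1$ together with the identity $|w_{n,t}|^2 = 1 + 8\sin^2(t/(2n))$ derived in the proof of Proposition \ref{prop:simple-ineq}. Since $r = n/2 \geq 2$, Proposition \ref{prop:ineq-bernoulli} applies with $x = 8\sin^2(t/(2n))$, giving
\[
|w_{n,t}|^n - 1 \;\geq\; 4n\sin^2\frac{t}{2n} + 8n(n-2)\sin^4\frac{t}{2n}.
\]
Dividing through by $|w_{n,t} - 1| = 2\sin(t/(2n))$ yields the intermediate bound $\abs{(w_{n,t})_n} \geq 2n\sin\tfrac{t}{2n} + 4n(n-2)\sin^3\tfrac{t}{2n}$.

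The next step is to replace each sine by a polynomial lower bound. Using $\sin u \geq u - \tfrac{u^3}{6}$ for $u \geq 0$ and the expansion $(u - u^3/6)^3 = u^3 - u^5/2 + u^7/12 - u^9/216 \geq u^3 - u^5/2$ (valid when $u^2 \leq 18$), one obtains $\sin^3 u \geq u^3 - u^5/2$ for $u \in [0, \pi]$. Substituting $u = t/(2n)$ in both inequalities and collecting terms gives the lower bound
\[
t + t^3\!\left(\frac{1}{2n} - \frac{25}{24n^2}\right) - \frac{t^5}{16n^3} + \frac{t^5}{8n^4},
\]
which dominates the claimed right-hand side by the trivial margin $\frac{t^5}{16n^3} + \frac{t^5}{16n^4} \geq 0$.

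The cases $n \in \{1, 2, 3\}$ require separate treatment, since Proposition \ref{prop:ineq-bernoulli} fails for the exponents $r \in \{1/2, 1, 3/2\}$. For $n = 1$, a straightforward calculus exercise shows the right-hand side is bounded above by $1 = \abs{(w_{1,t})_1}$ on $[0, 2\pi]$. For $n = 2$, the formula $\abs{(w_{2,t})_2}^2 = 10 - 6\cos(t/2) \geq 4$ gives $\abs{(w_{2,t})_2} \geq 2$, which exceeds the (easily computed) maximum of the right-hand side on $[0, 2\pi]$. For $n = 3$, I would work with the closed form $\abs{(w_{3,t})_3}^2 = 9 + 48\sin^2(t/6) + 112\sin^4(t/6) \geq (3 + 8\sin^2(t/6))^2$, lower-bound $\sin(t/6)$ by $t/6 - t^3/1296$, and verify the resulting polynomial inequality term by term on $[0, 2\pi]$. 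The principal obstacle is really this small-$n$ case work rather than anything in the main argument; once the $t^3$ coefficient $-\tfrac{1}{24n^2} + \tfrac{n-2}{2n^2}$ collapses to $\tfrac{1}{2n} - \tfrac{25}{24n^2}$ in the $n \geq 4$ derivation, the remainder is routine bookkeeping.
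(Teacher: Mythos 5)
Your argument for $n\geq 4$ is the same as the paper's: apply the triangle inequality to get $\abs{(w_{n,t})_n} \geq \bigl((1+8\sin^2\tfrac{t}{2n})^{n/2}-1\bigr)/(2\sin\tfrac{t}{2n})$, invoke Proposition~\ref{prop:ineq-bernoulli} with $r=n/2$ and $x=8\sin^2\tfrac{t}{2n}$ to keep the quartic term, then substitute the cubic Taylor lower bound for each sine and simplify. Your remark that the $t^5$ coefficient actually works out to $-\tfrac{1}{16n^3}+\tfrac{1}{8n^4}$ rather than the weaker $-\tfrac{1}{8n^3}+\tfrac{1}{16n^4}$ appearing in the proposition is correct --- the paper's own final display makes the same algebraic slip, transposing the $\tfrac18$ and $\tfrac1{16}$. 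Since the bound you derive dominates the stated one for all $n\geq 1$, the proposition as written remains true, but the discrepancy is worth flagging.

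More usefully, you identify a gap the paper glosses over: Proposition~\ref{prop:ineq-bernoulli} requires $r\geq 2$, i.e.\ $n\geq 4$, while the proposition is asserted for \emph{all} $n$. (The second-order Bernoulli bound genuinely fails at $r=3/2$ --- try $x=1$ --- and for $n=1$ the coefficient $4n(n-2)<0$ reverses the direction of the sine substitution.) Your patches for the small cases are sound and I have spot-checked them: $\abs{(w_{1,t})_1}=1$ exceeds the right-hand side (which peaks near $0.51$); $\abs{(w_{2,t})_2}^2 = 10-6\cos\tfrac t2\geq 4$ beats the right-hand side maximum of about $1.55$; and the identity $\abs{(w_{3,t})_3}^2 = 9+48\sin^2\tfrac t6+112\sin^4\tfrac t6\geq\bigl(3+8\sin^2\tfrac t6\bigr)^2$ is correct. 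For $n=3$ the $8\sin^2$ term is essential, since the right-hand side peaks around $3.65$ near $t\approx 3.5$, above the constant $3$; your bound $\sin\tfrac t6\geq \tfrac t6-\tfrac{t^3}{1296}$ suffices to close that gap, though ``verify term by term'' understates the degree-five polynomial comparison that remains. In context this only matters for rigor: Theorem~\ref{thm:min-geom-series} invokes the proposition solely for $n\geq 15$, so the small-$n$ cases are never used --- but as the proposition is stated for all $n$, you are right to supply them.
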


The right hand side of \eqref{eqn:lower-bound-large-t} is the fifth-order Taylor approximation of the left hand side as a function of $t$.  The simpler inequality $|w_{n,t}^n - 1| \geq t|w_{n,t} - 1|$ is easier to prove, but the resulting bounds are not strong enough to prove Theorem \ref{thm:min-geom-series} without quadrupling the number of base cases.

\begin{proof}
	As before, derive the inequality
	\[
	\abs{\frac{w_{n,t}^n - 1}{w_{n,t} - 1}} \geq
	\frac{(1 + 8\sin^2\frac t{2n})^{n/2} - 1}{2\sin\frac t{2n}}.
	\]
	This time, we use Proposition \ref{prop:ineq-bernoulli} in conjunction with $\sin x \geq x - \tfrac{x^3}6$ to obtain
	\begin{align*}
		\frac{(1 + 8\sin^2\frac t{2n})^{n/2} - 1}{2\sin\frac t{2n}} &\geq \frac{(1 + 4n\sin^2\frac{t}{2n} + 8n(n-2)\sin^4\frac{t}{2n}) - 1}{2\sin\frac{t}{2n}}\\
		&= 2n\sin\frac{t}{2n} + 4n(n-2)\sin^3\frac{t}{2n}\\
		&\geq 2n\left(\frac{t}{2n} - \frac{t^3}{48n^3}\right) + 4n(n-2)\left(\frac{t}{2n} - \frac{t^3}{48n^3}\right)^3\\
		&=t - \frac{t^3}{24n^2} + \frac{(n-2)t^3}{2n^2}\left(1 - \frac{t^2}{24n^2}\right)^3.
	\end{align*}
	Finally, Bernoulli's Inequality allows us to bound the nonlinear terms:
	\begin{align*}
		t - \frac{t^3}{24n^2} + \frac{(n-2)t^3}{2n^2}\left(1 - \frac{t^2}{24n^2}\right)^3
		&\geq t - \frac{t^3}{24n^2} + \frac{(n-2)t^3}{2n^2}\left(1 - \frac{t^2}{8n^2}\right)\\
		&= t + t^3\left(\frac{1}{2n} - \frac{25}{24n^2}\right) - t^5\left(\frac{1}{8n^3} - \frac{1}{16n^4}\right).\qedhere
	\end{align*}
\end{proof}

We are now able to prove Theorem \ref{thm:min-geom-series}.

\begin{proof} Table \ref{table:min-polys} shows Theorem \ref{thm:min-geom-series} for $n\leq 14$, so assume $n\geq 15$.  
	
	\par First suppose $t\geq 2\pi$.  Here, use Proposition \ref{prop:simple-ineq} to write
	\[
	\abs{(w_{n,t})_n} \geq 2n\sin\frac{t}{2n} \geq 2n\sin\frac \pi n.
	\]
	The right hand side is increasing as a function of $n$ and evaluates to $12\sin\frac\pi 6 = 6$ for $n = 6$.  It follows that $\abs{(w_{n,t})_n}\geq 6$ for $n\geq 6$, ergo for $n\geq 15$.  
	
	\par We may now assume $t\in[0,2\pi]$.  By Propositions \ref{prop:linear-near-zero} and \ref{prop:simple-ineq}, we know
	\begin{equation}\label{eqn:complicated-ineq}
		\abs{(w_{n,t})_n}\geq \max\left\{n\left(1 - \frac{t}{2\pi}\right),t + t^3\left(\frac{1}{2n} - \frac{25}{24n^2}\right) - t^5\left(\frac{1}{8n^3} - \frac{1}{16n^4}\right)\right\}.
	\end{equation}
	
	\par To make the analysis simpler, we bound the $t^5$ term in \eqref{eqn:complicated-ineq}.  Because $t\in[0,2\pi]$,
	\begin{align*}
		t^5\left(\frac{1}{8n^3} - \frac{1}{16n^4}\right) &= \frac{t^3}{n^2}\cdot t^2\left(\frac{1}{8n} - \frac{1}{16n^2}\right)\\
		& \leq \frac{t^3}{n^2}\cdot (2\pi)^2\left(\frac{1}{8\cdot 15} - \frac{1}{16\cdot 15^2}\right) = \frac{29\pi^2}{900}\frac{t^3}{n^2} < \frac{t^3}{3n^2}.
	\end{align*}
	Combining this with Propositions \ref{prop:linear-near-zero} and \ref{prop:simple-ineq}, it follows that
	\begin{align}
		\abs{(w_{n,t})_n}&\geq \max\left\{n\left(1 - \frac t{2\pi}\right), t + t^3\left(\frac{1}{2n} - \frac{25}{24n^2}\right) - \frac{t^3}{3n^2}\right\} \nonumber\\
		&= \max\left\{n\left(1 - \frac t{2\pi}\right), t + t^3\left(\frac{1}{2n} - \frac{11}{8n^2}\right)\right\}.\label{eqn:bounding-max}
	\end{align}
	
	\begin{figure}[ht]
		\centering
		\includegraphics[scale=0.5]{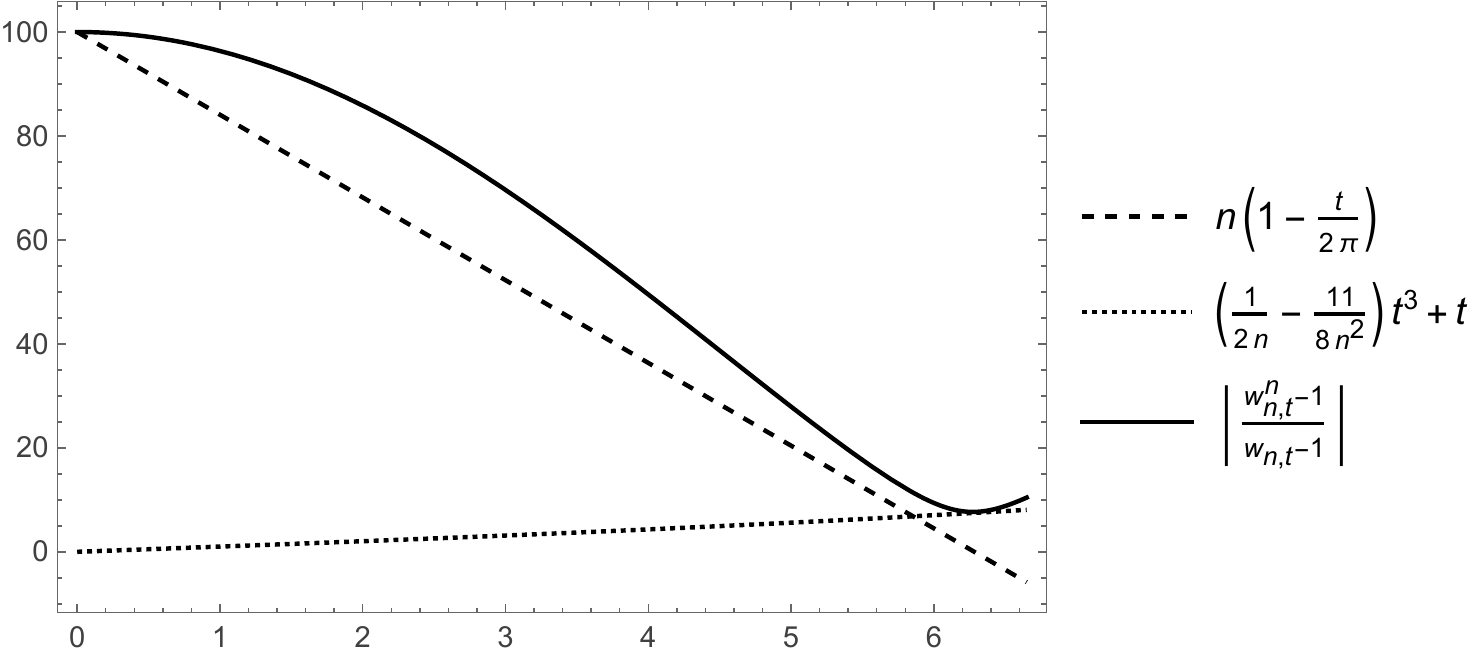}
		\caption{\small{Plot comparing $|(w_{n,t})_n|$ to the two expressions in \eqref{eqn:bounding-max} when $n = 100$.  Note the near-equality case around $t = 2\pi$.}}
	\end{figure}
	
	Now let
	\[
	M\coloneqq 2\pi\left(1 - \frac{5.5}{n}\right) = \pi\left(2 - \frac{11}n\right).
	\]
	There are two cases to consider.  First, suppose $t\leq M$.  Then
	\[
	n\left(1 - \frac{t}{2\pi}\right) \geq n\left(1 - \frac{M}{2\pi}\right) = \frac{11}2.
	\]
	Now suppose $t\geq M$.  Because $n\geq 15$, the coefficient $\tfrac{1}{2n} - \tfrac{11}{8n^2}$ is positive, so
	\begin{equation}\label{eqn:asymp-w-tn}
		t + t^3\left(\frac{1}{2n} - \frac{11}{8n^2}\right) \geq \pi\left(2 - \frac{11}n\right) + \pi^3\left(2 - \frac{11}n\right)^3\left(\frac{1}{2n} - \frac{11}{8n^2}\right).
	\end{equation}
	To estimate \eqref{eqn:asymp-w-tn}, let $x = \tfrac 1n$, so it suffices to analyze the polynomial
	\[
	P(x) := \pi(2-11x) + \pi^3(2-11x)^3(\tfrac 12x - \tfrac{11}{8}x^2).
	\]
	The polynomial $P$ has exactly one critical point in the interval $[0,\tfrac 1{15}]$, occurring at $x = x_0 \approx 0.024$.  In particular, $P$ is increasing on $[0,x_0]$ and decreasing on $[x_0,\tfrac1{15}]$.  Compute
	\[
	P(0) = 2\pi\quad\text{and}\quad P\left(\frac 1{15}\right) = \frac{19 \pi }{15}+\frac{336091 \pi ^3}{6075000} \approx 5.695 > \frac{11}2.
	\]
	It follows that $P(x) > \tfrac{11}2$ for all $x\in[0,\tfrac{1}{15}]$, and thus $|w_{n,t}^n - 1| \geq M|w_{n,t}-1|$ for all $t\in[0,2\pi]$ and $n\geq 15$.  
\end{proof}

\begin{remark}
	\leavevmode
	\begin{enumerate}
		\item In the above proof, the maximum value of $P(x)$ in the interval $[0,\tfrac{1}{15}]$ is $P(x_0)\approx 7.2723$. 
		\item We conjecture the stronger bound $|(z)_n| \geq \min(n,2\pi)$.  The methods above cannot extend to this tougher inequality without extensive computer assistance.
	\end{enumerate}
\end{remark}

\par Finally, we will need two more special cases.  Our previous estimates are not \textit{quite} strong enough to obtain these inequalities, so we need to prove them separately.

\begin{prop}\label{prop:ineq-smallcases}
	Suppose $z\in \calB$.  Then
	\begin{align}
		|(z)_5| &= |1+z+z^2+z^3+z^4| \geq 5|z|,\label{eqn:min-5}\\
		|(z)_6| &= |1+z+z^2+z^3+z^4+z^5| \geq 6|z|.\label{eqn:min-6}
	\end{align}
\end{prop}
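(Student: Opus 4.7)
The plan is to apply the minimum modulus principle to reduce both inequalities to the boundary $\partial\calB$, then verify the boundary inequalities by explicit polynomial factoring in $c := \cos\theta$.

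First I would observe that $(z)_n/z$ is holomorphic on $\calB$ (since $0 \notin \calB$) and nonvanishing on $\calB$ for both $n = 5$ and $n = 6$: the zeros of $(z)_n$ are nontrivial $n$-th roots of unity, lying on the unit circle at distance strictly greater than $1$ from $z = 2$ (the unique point on the unit circle within distance $1$ of $2$ is $z = 1$, which is not a zero of $(z)_n$ since $(1)_n = n$). The minimum modulus principle then gives that $|(z)_n/z|$ attains its minimum on $\calB$ along $\partial\calB$. Parameterizing $\partial\calB$ by $z = 2 + e^{i\theta}$, so that $|z|^2 = 5 + 4c$ and $|z-1|^2 = 2(1+c)$ with $c = \cos\theta \in [-1, 1]$, and using $(z)_n = (z^n - 1)/(z-1)$, the desired inequality is equivalent to showing $F_n(c) := |z^n - 1|^2 - n^2 |z|^2 |z - 1|^2 \geq 0$ for $c \in [-1, 1]$. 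Expanding $z^n$ by the binomial theorem and using Chebyshev identities $\cos(k\theta) = T_k(c)$ exhibits $F_n$ as a polynomial of degree $n$ in $c$.

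Since the inequality is tight at $z = 1$ (i.e., $c = -1$), $(c+1)$ divides $F_n$. Direct computation reveals multiplicities three for $n = 5$ and two for $n = 6$, leaving residual factors $Q_5(c)$ (a quadratic) and $Q_6(c)$ (a quartic), respectively. For $n = 5$, $Q_5$ has negative discriminant and positive leading coefficient, hence is strictly positive on $\R$; combined with $(c+1)^3 \geq 0$ on $[-1, 1]$, this gives $F_5 \geq 0$. For $n = 6$, I would check $Q_6(-1) > 0$ and then show $Q_6$ is strictly increasing on $[-1, 1]$ by the following two-step argument: $Q_6''$ is a quadratic whose discriminant (computed explicitly) is negative, forcing $Q_6'' > 0$ throughout $\R$; combined with $Q_6'(-1) > 0$, this shows $Q_6' > 0$ on $[-1, \infty)$, so $Q_6(c) \geq Q_6(-1) > 0$ on $[-1, 1]$.

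The main obstacle is the polynomial bookkeeping: expanding $|z^n - 1|^2$ via Chebyshev polynomials and performing the repeated division by $(c+1)$ is elementary but lengthy, especially for $n = 6$. Once the factored forms are in hand, every positivity claim reduces to a negative-discriminant check for an explicit low-degree polynomial. A pleasant structural feature is that the higher multiplicity of $(c+1)$ for $n = 5$ relative to $n = 6$ reflects a higher-order cancellation in the defect at $z = 1$ in the odd case, yielding a quadratic rather than quartic residual and a correspondingly simpler positivity argument.
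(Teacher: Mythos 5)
Your proposal is correct and follows essentially the same path as the paper: reduce to $\partial\calB$ via the modulus principle (you phrase it as the minimum modulus principle on $(z)_n/z$, the paper as the maximum modulus principle on $z/(z)_n$ — equivalent), then convert to a one-variable real polynomial inequality and factor out the equality case at $z=1$. The only cosmetic difference is the choice of real parameter: you use $c = \cos\theta$ with $z = 2 + e^{i\theta}$, while the paper uses $r = |z|$; on the boundary circle these are related by $r^2 = 5 + 4c$, so your factors $(c+1)^3$ and $(c+1)^2$ are exactly the paper's $(r^2-1)^3$ and $(r^2-1)^2$ up to constants, and your residuals $Q_5, Q_6$ are the paper's $31r^4+98r^2+211$ and $63r^8+132r^6+210r^4+308r^2-665$ under the substitution. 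For the $n=6$ residual, the paper exploits that the polynomial in $r$ has all nonnegative coefficients (so it is trivially increasing on $[0,\infty)$ with positive value at $r=1$), which is marginally slicker than your $Q_6''$-discriminant plus $Q_6'(-1)>0$ route, but both are sound.
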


\begin{proof}
	We first prove \eqref{eqn:min-5}.  Observe that the function $f(z) = \tfrac{z}{(z)_5}$ is analytic in $\calB$, so by the Maximum Modulus Principle we may assume $z\in\partial B$, i.e. $|z-2| = 1$.  Write $z = re^{i\theta}$.  Rewrite the given inequality to $|z^5 - 1| \geq 5|z| |z-1|$ and square both sides; this yields the equivalent inequality
	\begin{equation}\label{eqn:polar-coord}
		r^{10} - 2r^5\cos(5\theta) + 1 \geq 25r^2(r^2 - 2r\cos\theta + 1).
	\end{equation}
	Observe that $|z-2| = 1$ implies $r^2 - 4r\cos\theta + 4 = 1$, or $\cos\theta = \frac{r^2+3}{4r}$.  Under this substitution, the right hand side of \eqref{eqn:polar-coord} becomes
	\[
	25r^2\left[r^2 - 2r\cdot \frac{r^2+3}{4r} + 1\right] = \frac{25}2(r^4 - r^2).
	\]
	Analogously, the left hand side of \eqref{eqn:polar-coord} becomes
	\begin{align*}
		r^{10} - 2r^5\cos(5\theta) + 1 &= r^{10} - 2r^5\left(16\cos^5\theta - 20\cos^3\theta + 5\cos\theta\right) + 1\\
		&= r^{10} - 2r^5\left[16\left(\frac{r^2+3}{4r}\right)^5 - 20\left(\frac{r^2+3}{4r}\right)^3 + 5\left(\frac{r^2+3}{4r}\right)\right] + 1\\
		&= r^{10} - 32\left(\frac{r^2+3}4\right)^5 - 20r^2\left(\frac{r^2+3}{4}\right)^3 + 5r^4\left(\frac{r^2+3}4\right)+1\\
		&= \frac1{32}(31r^{10} + 5r^8 + 10r^6 + 30r^4 + 135r^2 - 211).
	\end{align*}
	
	It follows that \eqref{eqn:polar-coord} is equivalent to the inequality 
	\begin{equation}\label{eqn:reduction-inequality}
		\frac1{32}(31r^{10} + 5r^8 + 10r^6 + 30r^4 + 135r^2 - 211) \geq \frac{25}2(r^4 - r^2).
	\end{equation}
	But \eqref{eqn:reduction-inequality} follows from the unexpected factorization
	\[
	\text{LHS} - \text{RHS} = \frac{(r^2-1)^3(31r^4 + 98r^2 + 211)}{32} \geq 0.
	\]
	This proves \eqref{eqn:min-5}.
	
	The proof of \eqref{eqn:min-6} is similar, so we only sketch the details.  By the Maximum Modulus Principle we may assume $|z-2| = 1$.  Squaring both sides yields the equivalent inequality
	\begin{equation}\label{eqn:polar-coord-2}
		r^{12} - 2r^6\cos(6\theta) + 1 \geq 36r^2(r^2 - 2r\cos\theta + 1).
	\end{equation}
	Substituting $\cos \theta = \tfrac{r^2+3}{4r}$ and expanding yields the inequality
	\[
	\frac{1}{64} \left(63 r^{12}+6 r^{10}+9 r^8+20 r^6+81 r^4+486 r^2-665\right) \geq 18(r^4 - r^2).
	\] 
	Here, we have the factorization
	\begin{equation}\label{eqn:ineq-2}
		\text{LHS} - \text{RHS} =  \frac{(r^2-1)^2(63r^8+132r^6+210r^4+308r^2-665)}{64}.
	\end{equation}
	The polynomial $g(x)\coloneqq  63x^8+132x^6+210x^4+308x^2-665$ is increasing on $[0,\infty)$ and $g(1) = 48$.  Because $r\in[1,3]$, we know $g(r) \geq 0$, and thus the right hand side of \eqref{eqn:ineq-2} is always nonnegative.
\end{proof}

\begin{remark}
	One might suspect that $|(z)_n| \geq n |z|$ more generally whenever $z\in\calB$.  However, numerical evidence suggests that the value $\min_{z\in B}\abs{\tfrac{(z)_n}{z}}$ approaches $2\pi$ as $n\to\infty$.  It seems that this inequality is true \textit{only} for $n = 5$, $6$, and $7$, but we do not have a proof.
\end{remark}

\subsection{Set Inclusion Inequalities}

This section focuses on showing that certain sets in $\C$ are subsets of $E = E_{\pi/12}$, where we use the definition of $E_\alpha$ given in Theorem \ref{thm:parabola}.  

\par We first establish two preliminary facts about elements of $\calB$. Because nonreal zeros of $\stern n\lm$ come in conjugate pairs, it suffices to examine $z\in\calB^+$.

\begin{prop}\label{prop:silli}
	Let $z\in\calB^+$.  Then $0\leq \arg z \leq \tfrac\pi 6$ and
	\begin{equation}\label{eqn:re-ineq}
		|\Re z^{-4}|\leq |\Re z^{-2}|.
	\end{equation}
\end{prop}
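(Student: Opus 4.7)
The plan is to write $z = re^{i\theta}$ and reduce both claims to elementary trigonometric inequalities. The argument bound comes from pure geometry: the tangent lines from the origin to the circle $|w-2|=1$ touch it at points where the tangent segment is perpendicular to a radius of length $1$, so the tangent line from the origin makes angle $\arcsin(1/2)=\pi/6$ with the real axis. Hence every $z\in\calB$ satisfies $|\arg z|\leq \pi/6$, and restricting to $\Im z\geq 0$ gives $0\leq \arg z\leq \pi/6$. I would record, as part of this step, the two consequences that will be used in the second half: $\theta:=\arg z\in[0,\pi/6]$, and $r:=|z|\geq 1$ since the closest point of $\calB$ to the origin is $z=1$.

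For the second inequality \eqref{eqn:re-ineq}, the plan is to rewrite the two sides in polar form. Namely $\Re z^{-k}=\cos(k\theta)/r^k$, so \eqref{eqn:re-ineq} is equivalent to
\[
|\cos(4\theta)|\;\leq\;r^2\cos(2\theta),
\]
where I have used that $\cos(2\theta)\geq \cos(\pi/3)=\tfrac12>0$ throughout $\theta\in[0,\pi/6]$. Since $r\geq 1$ on all of $\calB$, it suffices to prove the cleaner, $r$-free statement
\[
|\cos(4\theta)|\;\leq\;\cos(2\theta)\qquad\text{for all }\theta\in[0,\pi/6].
\]

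To dispose of this, I substitute $\phi:=2\theta\in[0,\pi/3]$, so that the inequality becomes $|\cos(2\phi)|\leq\cos\phi$, and use the double-angle formula $\cos(2\phi)=2\cos^2\phi-1$. I would then split on the sign of $\cos(2\phi)$. When $\phi\in[0,\pi/4]$, the inequality $2\cos^2\phi-1\leq \cos\phi$ factors as $(2\cos\phi+1)(\cos\phi-1)\leq 0$, which holds because $\cos\phi\leq 1$. When $\phi\in(\pi/4,\pi/3]$, we instead need $1-2\cos^2\phi\leq \cos\phi$, which factors as $(2\cos\phi-1)(\cos\phi+1)\geq 0$, and this holds because $\cos\phi\geq \cos(\pi/3)=\tfrac12$. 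Putting everything together:
\[
|\Re z^{-4}|=\frac{|\cos(4\theta)|}{r^4}\leq \frac{\cos(2\theta)}{r^4}\leq\frac{\cos(2\theta)}{r^2}=|\Re z^{-2}|,
\]
which finishes the proof. No step looks like a serious obstacle; the only thing to watch is to make sure that the chain of reductions is valid with the correct signs (in particular, that $\cos(2\theta)$ is genuinely positive on the whole range $[0,\pi/6]$, which it is).
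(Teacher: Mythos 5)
Your proof is correct and follows essentially the same path as the paper: the same tangent-line geometry for the argument bound, and the same reduction of \eqref{eqn:re-ineq} to the trigonometric inequality $|\cos 4\theta|\leq\cos 2\theta$ on $[0,\tfrac\pi 6]$ combined with $|z|\geq 1$. The only cosmetic difference is that the paper verifies $|2x^2-1|\leq x$ for $x=\cos 2\theta\in[\tfrac12,1]$ via monotonicity of $h(x)=|2x^2-1|$, whereas you substitute $\phi=2\theta$, split on the sign of $\cos 2\phi$, and factor the resulting quadratics in $\cos\phi$; you also make the needed hypothesis $|z|\geq 1$ explicit, which the paper leaves implicit.
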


\begin{proof}
	The lower bound $0\leq \arg z$ is clear because $z$ lies in the first quadrant.  For the upper bound, observe that the line $y = \tfrac{1}{\sqrt 3}x$ is tangent to the circle $(x-2)^2+y^2 = 1$, so $\arg z\leq \arctan \tfrac{1}{\sqrt 3} = \tfrac\pi 6$.
	
	\par To prove \eqref{eqn:re-ineq}, we claim that
	\begin{equation}\label{eqn:cos-ineq-2}
		\abs{\cos 4\theta} \leq \abs{\cos 2\theta} \quad\text{whenever}\quad \theta\in[0,\tfrac\pi 6].
	\end{equation}
	To prove this, note that the function $h(x) = |2x^2 - 1|$ is decreasing for $x\in[\tfrac12,\tfrac{\sqrt 2}2]$ and increasing for $x \in[\tfrac{\sqrt 2}2, 1]$.  Furthermore, $h(\tfrac12) = \tfrac12$ and $h(1) = 1$.  It follows that 
	\[
	|2x^2 - 1|\leq |x|\quad\text{for all }x\in[\tfrac12,1],
	\]
	which is equivalent to \eqref{eqn:cos-ineq-2} under the substitution $x = \cos 2\theta$.
	
	\par This implies \eqref{eqn:re-ineq}, as
	\[
	\abs{\Re z^{-4}} = |z|^{-4}\abs{\cos 4\theta} \leq |z|^{-2}|\cos 2\theta| = |\Re z^{-2}|. \qedhere
	\]
\end{proof}

We now turn to the main results of this section.

\begin{prop}\label{prop:sets-parabola}
	The following sets in $\C$ are subsets of $E$:
	
	\noindent
	\def\arraystretch{1.25}
	\setlength{\tabcolsep}{25pt}
	\begin{tabular}{ll}
		(a) $\calE_a \coloneqq \ball{0}{0.23}$, & 
		(b) $\calE_b\coloneqq \ball{\frac32}{7/4}^+$, \\
		(c) $\calE_c\coloneqq \ball{0}{1/5} + \ball{2}{1}^+$, &
		(d) $\calE_d\coloneqq \{\Re w \geq 1.15\}^{-1} = \ball{0.575}{0.575}$, \\
		(e) $\calE_e\coloneqq \ball{0.25}{0.35}$. &
	\end{tabular}
\end{prop}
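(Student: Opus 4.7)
Since $E$ is a sublevel set of the convex function $a\mapsto |a|-\Re(ae^{-i\pi/6})$, it is convex; each of $\calE_a,\ldots,\calE_e$ is also convex. Consequently, in every case it suffices to verify the defining inequality
\[
|a|\leq \tfrac{\sqrt{3}}{2}\Re a+\tfrac{1}{2}\Im a+\tfrac{2+\sqrt{3}}{8}
\]
(written $|a|\leq L(a)$ below) on the boundary of $\calE_j$. Geometrically, $\partial E$ is a parabola with focus at the origin and axis in direction $e^{i\pi/6}$; its vertex lies at distance $\tfrac{1}{4}\cos^2(\pi/12)=\tfrac{2+\sqrt{3}}{16}\approx 0.2332$ from the focus, which gives the worst-case bound in directions opposite to the axis.

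For \textbf{(a)}, the minimum focus-to-parabola distance is precisely $\tfrac{2+\sqrt{3}}{16}>0.23$, so $\calE_a$ is contained in $E$. For \textbf{(b)}, the boundary of $\calE_b$ decomposes into the real segment $[-\tfrac14,\tfrac{13}{4}]$ and the upper semicircle $a=\tfrac32+\tfrac74 e^{i\theta}$ for $\theta\in[0,\pi]$. On the segment, casework on the sign of $x$ reduces the inequality to $-\tfrac14\leq x\leq \tfrac{(2+\sqrt{3})^2}{4}\approx 3.482$, which holds (with equality at the left endpoint). On the semicircle, the identity $\sqrt{3}\cos\theta+\sin\theta=2\sin(\theta+\pi/3)$ collapses $L(a)$ into a single sinusoid; squaring and simplifying then leaves a polynomial inequality in $\cos\theta$ that can be verified directly.

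For \textbf{(c)}, the boundary of $\calE_c$ decomposes into an upper semicircular arc of radius $\tfrac65$ around $2$, two lower quarter-circle arcs of radius $\tfrac15$ around $w=1$ and $w=3$, and a horizontal segment at $\Im w=-\tfrac15$. Each piece is checked by direct substitution into the defining inequality, analogously to (b). For \textbf{(d)} and \textbf{(e)}, each of which is a closed disk $\ball{c}{r}$, I parameterize the boundary as $a=c+re^{i\phi}$ and use the identities $L(a)=L(c)+r\cos(\phi-\pi/6)$ and $|a|^2=|c|^2+2r|c|\cos(\phi-\arg c)+r^2$ to reduce $|a|\leq L(a)$ to a trigonometric inequality in the single variable $\phi$. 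Locating the minimum of $L(a)-|a|$ by differentiation then pins down the binding angle and completes the check.

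I expect the main obstacle to be case (b), and (c) to a lesser extent: the segment in (b) is exactly tangent to $\partial E$ at $a=-\tfrac14$, so the estimates on the semicircle must be delicate enough to preserve the inequality throughout. The other cases are essentially mechanical once the parameterization is in place.
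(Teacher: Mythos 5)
Your overarching strategy is sound: $E$ is a sublevel set of the convex function $a\mapsto |a|-\Re(ae^{-i\pi/6})$ and hence convex, and each $\calE_j$ is a bounded convex set, so $\calE_j\subseteq E$ does follow from $\partial\calE_j\subseteq E$. Part (a) is essentially complete and matches the paper's computation (the vertex of $\partial E$ is at distance $\frac{2+\sqrt3}{16}\approx 0.2332>0.23$ from the focus). From there, however, your approach diverges from the paper's, and it is also left genuinely incomplete.

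The divergence: you propose to parameterize $\partial\calE_j$ and verify $|a|\leq L(a)$ pointwise, whereas the paper parameterizes $\partial E$ itself as $r e^{i(\theta+\pi/6)}$ with $r=\lambda/(1-\cos\theta)$, and then minimizes the distance from $\partial E$ to the relevant center (e.g.\ $\tfrac32$ for (b), $\tfrac14$ for (e)) or works with $E^{-1}$ for (d). For (c) the paper avoids boundary computations entirely: it covers $\calE_c$ by $C_1=\ball{1}{6/5}^+$ (inside $\calE_b$), $C_2=\ball{0}{1/5}$ (inside $\calE_a$), and the rectangle $C_3=[0,\tfrac{11}5]\times[-\tfrac15,0]$ whose four vertices lie in $E$, then invokes convexity of $E$. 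This is substantially cheaper than checking the semicircular arc of radius $\tfrac65$, the two quarter-arcs, and the lower segment one by one as you propose.

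The gap: for (b), (c), (d), (e) you reduce to ``a polynomial inequality in $\cos\theta$ that can be verified directly'' or ``locating the minimum by differentiation'' without carrying out the verification, and you yourself flag (b) as the bottleneck. On the semicircle $a=\tfrac32+\tfrac74 e^{i\theta}$, squaring $|a|\leq \tfrac74\cos(\theta-\tfrac\pi6)+\tfrac{2+7\sqrt3}{8}$ and expanding $\cos(\theta-\tfrac\pi6)$ mixes $\cos\theta$ and $\sin\theta$ and does not collapse to a polynomial in $\cos\theta$ alone; eliminating $\sin\theta$ introduces a further squaring, and the resulting polynomial is tight (equality at $\theta=\pi$, i.e.\ $a=-\tfrac14$, where the corner of $\partial\calE_b$ touches $\partial E$), so sign verification near that endpoint is genuinely delicate. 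The paper handles the corresponding delicacy by locating the two interior critical points of $g(\theta)=|z-\tfrac32|^2$ numerically and checking endpoints. Neither approach escapes numerics, but the paper's is carried to completion; as written, yours is a plan with the hardest computations left unverified, and for (c) it is also doing more work than necessary.
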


See Figure \ref{fig:sets-parabola} for plots of $\calE_a$ through $\calE_e$. The upshot of this proposition is that it will allow us to show that a complex number $w$ is in $E$ by instead showing it lies in one of the sets $\calE_a$ through $\calE_e$.  These new subsets appear more naturally in our calculations.

\begin{figure}[ht]
	\begin{center}
		\includegraphics[scale=0.78]{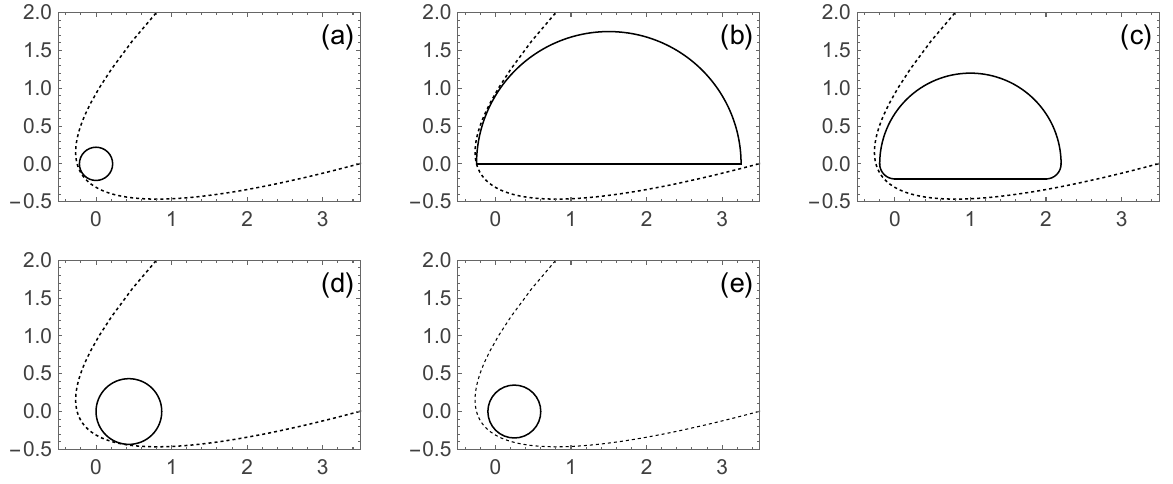}
	\end{center}
	\caption{\small{Plots of each of the five sets in Proposition \ref{prop:sets-parabola} (solid) compared with $E$ (dashed). Some of the bounds are quite tight.}}
	\label{fig:sets-parabola}
\end{figure}

\begin{proof}
	It is known (e.g. Chapter 3 Exercise 26 of \cite{Lorentzen2008}) that $E_\alpha$ can be written in the form
	\[
	\left\{
	r e^{i(\theta + 2\alpha)}: 0\leq r\leq \frac{\frac12\cos^2\alpha}{1-\cos\theta}
	\right\}.
	\]
	In our case, $\alpha = \tfrac\pi{12}$, so
	\[
	E_{\pi/12} = \left\{r e^{i(\theta + \pi/6)}:0\leq r\leq \frac\lm{1-\cos\theta}\right\}\quad\text{for}\quad \lm = \frac{2+\sqrt 3}8.
	\]
	We now proceed with the bounding.  
	
	\par For part (a), observe that $1-\cos\theta \geq 2$, so along $\partial E$ the minimum value of $r$ is $\tfrac12\lm > 0.233$.  This implies $\calE_a\subseteq E$.
	
	For part (b), again let $z$ lie on $\partial E$.  The restriction $\Im z \geq 0$ implies $\theta\in[-\tfrac\pi 6, \tfrac{5\pi}6]$.  Compute
	\begin{align*}
		\abs{z-\tfrac32}^2 &= r^2 - 3r\cos\left(\theta + \frac\pi 6\right) + \frac 94\\
		&= \frac{\lm^2}{(1-\cos\theta)^2} - \frac{3\lm \cos(\theta + \frac\pi 6)}{1-\cos\theta} + \frac 94 =: g(\theta).
	\end{align*}
	It remains to analyze the behavior of $g$.  Although $g$ is increasing on $[-\tfrac\pi 6,0)$, there are two critical points in $(0,\tfrac{5\pi}6]$: one at $\theta\approx 1.11731$ and one at $\theta\approx 1.75855$.  Checking these points, as well as the endpoints, yields that the minimum of $g$ on $[-\tfrac\pi 6, \tfrac{5\pi}6]$ is $g(\tfrac{5\pi}6) = \tfrac{49}{16}$.  Thus $|w-\tfrac32| \geq \tfrac{7}{4}$ and $\calE_b\subseteq E$.
	
	For part (c), we remark that
	\[
	\calE_c \subset C_1\cup C_2\cup C_3,
	\]
	where 
	\[
	C_1 = \ball{1}{6/5}^+,\quad C_2 = \ball{0}{1/5},\quad\text{and}\quad C_3 = [0,\tfrac{11}5]\times[-\tfrac15,0].
	\]
	(See Figure \ref{fig:sets-parabola}(c).)  By part (a), $C_1\subseteq \calE_a\subseteq E$, and by part (b), $C_2\subseteq \calE_b\subseteq E$.  To show $C_3\subseteq E$, observe that $E$ is the convex hull of the four complex numbers $0$, $-\tfrac 15i$, $\tfrac {11}5$, and $\tfrac{11}5 - \tfrac 15i$.  Each of these four points lies in $E$ by simple computation, and $E$ is a convex set.  It follows that $C_3\subseteq E$, and so $\calE_c\subseteq E$.
	
	For part (d), observe that 
	\[
	E^{-1} = \left\{r e^{i(\theta - \pi/6)}:r\geq \frac 1\lm(1-\cos\theta)\right\}.
	\]
	Thus, writing $w = re^{i(\theta - \pi/6)} \in\partial E$ in polar coordinates,
	\begin{equation}\label{eqn:part-d}
	\Re w = \frac 1\lm (1-\cos\theta)\cos(\theta - \tfrac\pi 6).
	\end{equation}
	The maximum value of \eqref{eqn:part-d} on $[0, 2\pi]$ is $1.13861\ldots < 1.14$, occurring when $\theta = \tfrac{4\pi}9$.
	It follows that $\{\Re w \geq 1.15\}\subseteq E^{-1}$, so $\calE_d\subseteq E$.
	
	Finally, for part (e), we proceed as we did in our proof of (b).  In this case, our goal is to minimize the quantity
	\begin{align*}
		\abs{z-\tfrac14}^2 &= r^2 - \frac 12r\cos\left(\theta + \frac\pi 6\right) + \frac 1{16}\\
		&= \frac{\lm^2}{(1-\cos\theta)^2} - \frac{\lm \cos(\theta + \frac\pi 6)}{2(1-\cos\theta)} + \frac 1{16} =: h(\theta).
	\end{align*}
	Within the interval $[0,2\pi]$, $h$ has exactly one local minimum at $\theta =: \theta_0 \approx 4.46993$.  It follows that $h(\theta) \geq h(\theta_0) > 0.15$, and so $|z-\tfrac14| > \sqrt{0.15} \geq 0.387 > 0.35$.  In turn, $\calE_e\subseteq E$.
\end{proof}

\section{Proof of Theorem \ref{thm:criterion-nonzero}}

We are finally ready to prove Theorem \ref{thm:criterion-nonzero}.

\begin{proof}[Proof (Theorem \ref{thm:criterion-nonzero})]
	Our proof will involve splitting the ordered pairs $(a,b)\in\N^2$ into several cases and analyzing each case independently.  In some cases, we will analyze the original ratio $z_{a,b} = \tfrac{z^a}{(z)_a(z)_b}$, while in others we instead examine its reciprocal $z_{a,b}^{-1} = \tfrac{(z)_a(z)_b}{z^a}$.  We shall refer to these expressions by $(*)$ and $(*^{-1})$, respectively.
	
	\begin{itemize}
		\item $\boldsymbol{b=1,a\leq 4{:}}$ Begin with $(*^{-1})$.  The quotient expands to
		\[
		\frac{(z)_a}{z^a} = \frac{1+z+\cdots + z^{a-1}}{z^a} = \frac 1z + \frac1{z^2} + \cdots + \frac{1}{z^a}.
		\]
		Note that $\Im z^{-j} \leq 0$ for $1\leq j\leq a$.  Furthermore, $\Re z^{-j} \geq 0$ for $j=1,2,3$, and $\Re z^{-4} \geq \Re z^{-2}$ by
		part 2 of Proposition \ref{prop:silli}.  It  follows that
		\[
		\Re \frac{(z)_a}{z^a} \geq \Re \frac 1z \geq \frac 13.
		\]
		Combining both parts yields $\tfrac{z^a}{(z)_a}\in \ball{\tfrac 32}{3/2}^+\subseteq \calE_b\subseteq E$.
		
		\item $\boldsymbol{2 \leq b\leq 4, a \leq 4{:}}$ Begin with $(*^{-1})$.  We perform similar computations to the previous part, but we can no longer assert that $\tfrac{(z)_b(z)_a}{z^a}$ lies in the lower half plane.  Write
		\begin{align*}
			\frac{(z)_b(z)_a}{z^a} &= (1+z+\cdots+z^{b-1})\left(\frac 1z + \cdots + \frac{1}{z^a}\right)\\
			&=\sum_{\substack{0\leq j\leq b-1\\1\leq k\leq a}}z^{j-k} = \sum_{m = -a}^{b-2}c_m z^m,
		\end{align*}
		where
	\[
	c_m = \#\{(j,k)\in[0,a]\times[1,b]: j-k = m\}.
	\]
	The sequence $(c_m)_m$ is weakly increasing between $m = -b$ and $m= - 1$, so $c_{-4} \leq c_{-2}$.  (These coefficients may be zero.)  This means
	\[
	\Re (c_{-4} z^{-4} + c_{-2}z^{-2}) \geq c_{-2}\Re z^{-2} - c_{-4}|\Re z^{-4}| \geq 0.
	\]
	It follows that
	\begin{align*}
		\Re \frac{(z)_b(z)_a}{z^a} &= \sum_{m = -a}^{b-2} c_m \Re z^m \\
		& \geq c_0 + c_{-1}\Re z^{-1} + \Re(c_{-2}z^{-2} + c_{-4}z^{-4}) \\
		&\geq \frac 43 > 1.15.
	\end{align*}
	Thus $\tfrac{z^a}{(z)_a(z)_b}\in \calE_d\subseteq E$.
	
	\item $\boldsymbol{b=1, a\geq 5{:}}$ Begin with $(*)$. Write
	\[
	\frac{z^a}{(z)_a} = \frac{z^a - 1 + 1}{(z)_a} = z - 1 + \frac{1}{(z)_a}.
	\]
	Because $z-1\in \ball{2}{1}^+$ and $|(z)_a| \geq 5$, we deduce 
	\[
	\frac{z^a}{(z)_a}\subseteq \ball{0}{1/5} + \ball{2}{1}^+ = \calE_c\subseteq E.
	\]
	
	\item $\boldsymbol{b=2, a\geq 5{:}}$ Begin with $(*)$.  Write
	\[
	\frac{z^a}{(z)_a(1+z)} = \frac{z-1}{z+1} + \frac{1}{(z)_a(z+1)}\in B_{1/10}\left(\frac{z-1}{z+1}\right).
	\]
	The function $\eta(z)\coloneqq \frac{z-1}{z+1}$ is a M{\"o}bius transformation with $\eta(1) = 0$ and $\eta(3) = \tfrac12$.  This means $\eta$ sends the ball $\ball{2}{1}$ to the ball $\ball{\tfrac14}{1/4}$.  It follows from Proposition \ref{prop:silli} that 
	\[
	B_{1/10}\left(\frac{z-1}{z+1}\right)\subseteq \ball{0.25}{ 0.35} = \calE_e\subseteq E.
	\]
	
	\item $\boldsymbol{b\in\{3,4\}, a\geq 5{:}}$ Begin with $(*)$.  Recall that $|(z)_a| \geq 5$ and $|(z)_b| \geq 3$ by Theorem \ref{thm:min-geom-series}.  Write
	\[
	\frac{z^a}{(z)_a(z)_b} = \frac{z-1}{(z)_b} + \frac{1}{(z)_a(z)_b}\in B_{1/15}\left(\frac{z-1}{(z)_b}\right).
	\]
	To bound the location of $\tfrac{z-1}{(z)_b}$, observe that its reciprocal is
	\begin{align}
		\frac{(z)_b}{z-1} &= \frac{z^{b-1} + \cdots + z + 1}{z-1} \nonumber \\ &= z^{b-2} + \cdots +(b-2)z + (b-1) + \frac{b}{z-1}. \label{eqn:recip-frac}
	\end{align}
	Because $b\leq 4$, Proposition \ref{prop:silli} implies that all terms in \eqref{eqn:recip-frac} have nonnegative real part, and furthermore $z\in \calB$ implies $\Re \tfrac{1}{z-1} \geq \tfrac12$.  It follows that 
	\[
	\Re\left(\frac{(z)_b}{z-1}\right) \geq (b-2) + (b-1) + \frac 12b = \frac 52b - 3 \geq \frac 92.
	\] 
	This means that $\tfrac{z-1}{(z)_b}\in \ball{\tfrac 19}{1/9}$, and so
	\[
	B_{1/15}\left(\frac{z-1}{(z)_b}\right) \subseteq B_{8/45}\left(\frac 19\right)\subseteq\calE_e\subseteq E.
	\]
	
	\item $\boldsymbol{b\in\{5,6\}{:}}$ Begin with $(*)$.  By Theorem \ref{thm:mitrinovic} and Lemma \ref{prop:ineq-smallcases},
	\[
	\left|\frac{z^a}{(z)_a(z)_b}\right|= \left|\frac{z^{a-1}}{(z)_a}\cdot \frac{z}{(z)_b}\right| \leq \left|\frac{z}{(z)_b}\right| \leq \frac 15.
	\]
	Thus $\tfrac{z^a}{(z)_a(z)_b}\in \calE_a\subseteq E$.
	
	\item $\boldsymbol{b\geq 7{:}}$ Begin with $(*^{-1})$.  We are finally able to use our more general inequalities.  Observe that 
	\[
	\abs{\frac{(z)_b}{z}} = \abs{\frac 1z + (z)_{b-1}} \geq \abs{(z)_{b-1}} - \frac{1}{|z|} \geq 4.5,
	\]
	so $|\tfrac{z}{(z)_b}| < 0.233$.  It follows that $|\tfrac{z^a}{(z)_a(z)_b}| \leq |\tfrac{z}{(z)_b}| < 0.233$ and $\tfrac{z^a}{(z)_a(z)_b}\in \calE_a\subseteq E$.
\end{itemize}

We have \textit{finally} covered all ordered pairs $(a,b)\in\N^2$, thus proving Theorem \ref{thm:criterion-nonzero}.
\end{proof}

\section{Comments and Future Work}
Because the sequence $\stern n\lm$ is relatively new, several unresolved questions regarding their properties and zero distributions persist.  These questions touch many areas of mathematics, including continued fraction theory and combinatorics.

\par While Theorem \ref{thm:parabola} was sufficient to show $\calB$ has no Stern zeros, it cannot be used to show that all Stern zeros have real part less than $1$.  There exist complex numbers $z$ with $\Re z \geq 1$ for which the set $\calA_z$ is not a subset of any parabolic region $E_\alpha$.  Figure \ref{fig:near-miss} illustrates $z = 1 + 2.5i$ as an example.  Any attempts to prove this stronger conjecture will need more general results from continued fraction theory.

\begin{figure}[ht]
\centering
\includegraphics[scale=0.67]{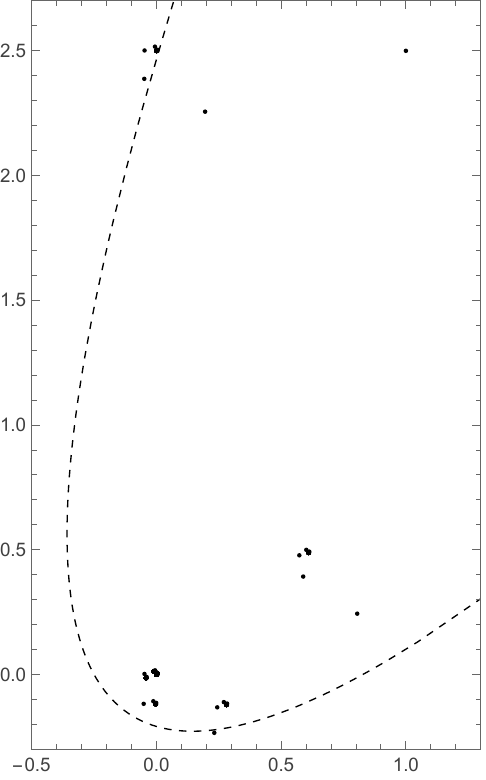}
\caption{\small{The set $\calA_{1+2.5i}$ (solid) compared with the near-miss region $E_{0.16\pi}$ (dashed).}}
\label{fig:near-miss}
\end{figure}

\par Stronger estimates for $|(z)_m|$ would also be desirable.  For example, it may be possible to strengthen Lemma \ref{lemma:ineq-push-left} to a statement of the form 
\[
|(x+yi)_n| \geq C|(x_0 + yi)_n|,
\] 
where $C = C_{n,y,x,x_0}$ is a positive number greater than $1$.  We hope that this stronger statement may lead to cleaner proofs and further generalizations of Theorem \ref{thm:min-geom-series}.

Finally, let us comment on a remark from the introduction of this paper.  In Section 1, we mentioned the stark contrast in the behavior of $\calS$ between the two half-planes $\{\Re w > 0\}$ and $\{\Re w < 0\}$.  Recent papers have aimed to make these differences explicit.  For example, while we conjecture that the set $\{\Re w: w\in \calS\}$ is bounded above, it is \textit{not} bounded below.

\begin{thm}[{\cite[Proposition 3.3]{Dilcher2017}}] The roots of $\stern{\alpha_n}\lm$, where $\alpha_n = \tfrac13(2^n - (-1)^n)$, are negative real numbers of the form 
\[
-\frac14\sec^2\left(\frac{j\pi}n\right), \quad j = 1,2,\ldots, \lfloor\tfrac{n-1}2\rfloor.
\]
Consequently, the interval $(-\infty,-\tfrac14]$ is dense in $\calS$.
\end{thm}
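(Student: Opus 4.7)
The plan is to reduce the theorem to a Chebyshev-type recursion whose zeros are known in closed form. First, I would identify $\alpha_n$ with the iterated bracket $[[1,1,\ldots,1]]$ of length $n-1$: unpacking \eqref{eqn:bracket-def} and summing the alternating geometric series yields
\[
[[\underbrace{1,1,\ldots,1}_{n-1}]] = \sum_{k=0}^{n-1}(-1)^k\,2^{n-1-k} = \frac{2^n - (-1)^n}{3} = \alpha_n.
\]
Applying Theorem \ref{thm:stern-poly-recursion} with every $a_i = 1$ (and using $(\lm)_1 = 1$) then gives the three-term linear recurrence
\[
\stern{\alpha_n}{\lm} = \stern{\alpha_{n-1}}{\lm} + \lm\,\stern{\alpha_{n-2}}{\lm}\qquad(n\geq 3),
\]
with initial conditions $\stern{\alpha_1}{\lm} = \stern{\alpha_2}{\lm} = 1$ verified directly from $\alpha_1 = \alpha_2 = 1$.

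Second, I would substitute $\lm = -\tfrac14\sec^2\theta$ for $\theta \in (0, \tfrac\pi 2)$ and rescale by setting $Q_n(\theta) \coloneqq (2\cos\theta)^n\stern{\alpha_n}{\lm}$. A short calculation converts the previous display into the Chebyshev recursion $Q_n = 2\cos\theta \cdot Q_{n-1} - Q_{n-2}$ with $Q_1 = 2\cos\theta$ and $Q_2 = 4\cos^2\theta$. Solving this --- either by writing $Q_n = U_n(\cos\theta) + U_{n-2}(\cos\theta)$ (where $U_k$ is the Chebyshev polynomial of the second kind, $U_{-1} \coloneqq 0$) and invoking $\sin((n{+}1)\theta) + \sin((n{-}1)\theta) = 2\cos\theta\sin(n\theta)$, or by matching constants in the ansatz $Q_n = Ae^{in\theta} + Be^{-in\theta}$ --- yields the closed form
\[
Q_n(\theta) = \frac{2\cos\theta\,\sin(n\theta)}{\sin\theta}.
\]

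Third, since $\cos\theta \neq 0$ on $(0,\tfrac\pi 2)$, we have $\stern{\alpha_n}{-\tfrac14\sec^2\theta} = 0$ iff $\sin(n\theta) = 0$; the solutions in this range are exactly $\theta = j\pi/n$ for $j = 1, \ldots, \lfloor (n-1)/2 \rfloor$. A quick induction on the recurrence above shows $\deg \stern{\alpha_n}{\lm} = \lfloor (n-1)/2 \rfloor$, so these negative reals exhaust all zeros; checking the characteristic equation at its double-root case ($\lm = -\tfrac14$) confirms that $-\tfrac14$ itself is never a root. For the denseness conclusion, the ratios $j/n$ with $1 \leq j < n/2$ are dense in $(0,\tfrac12)$, so by continuity of $\theta \mapsto -\tfrac14\sec^2\theta$ the roots are dense in $(-\infty, -\tfrac14)$, while $-\tfrac14$ arises as the limit with $j = 1$ and $n \to \infty$.

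The main obstacle I anticipate is the bookkeeping in the second step --- pinning down $Q_1$ and $Q_2$ and verifying the constants in the closed form --- since an off-by-one shift would misplace every root. The remaining steps are either a geometric series, a linear recurrence with constant coefficients, or a standard trigonometric identity, so no deeper machinery appears necessary.
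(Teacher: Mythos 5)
The paper itself does not prove this theorem; it only cites it from \cite{Dilcher2017}, so there is no in-paper argument to compare against. Your proposal is a complete and correct proof, and it is the natural route: identify $\alpha_n$ with the all-ones bracket $[[1,\ldots,1]]$ of length $n-1$, specialize Theorem~\ref{thm:stern-poly-recursion} to the two-term recurrence $\stern{\alpha_n}\lm=\stern{\alpha_{n-1}}\lm+\lm\stern{\alpha_{n-2}}\lm$, and diagonalize via $\lm=-\tfrac14\sec^2\theta$, which turns the recursion into the Chebyshev three-term relation. I re-derived the closed form $Q_n(\theta)=2\cos\theta\sin(n\theta)/\sin\theta$ from your $Q_1=2\cos\theta$, $Q_2=4\cos^2\theta$ and it checks out, and the degree count $\deg\stern{\alpha_n}\lm=\lfloor(n-1)/2\rfloor$ follows by induction because Stern polynomials have nonnegative coefficients, so no leading-term cancellation can occur in $\max(\deg\stern{\alpha_{n-1}},\,1+\deg\stern{\alpha_{n-2}})$ --- a point worth saying out loud, since when $n$ is even the two candidate degrees coincide.

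Two small remarks. First, your side check that $\lm=-\tfrac14$ is never a root is not strictly needed, since $\sec^2(j\pi/n)>1$ for $1\le j\le\lfloor(n-1)/2\rfloor$ already places every root strictly below $-\tfrac14$, and the degree argument shows there are no others; it is fine to include, but it is redundant. Second, for the density clause one should also note (as you do implicitly via the definition of $\calS$) that each $\alpha_n$ is odd, so these roots genuinely contribute to $\calS$; this follows at once from $\stern{\alpha_n}0=1$.
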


This leads into a related question: is the set of \textit{imaginary} parts bounded?  Surprisingly, the answer is also ``no''.

\begin{thm}
The polynomials $\stern{t_n}\lm$, where
\[
t_n = \frac{4^{n+1} + (-2)^n + 1}3,
\]
have roots $z_n\in\C$ satisfying $\Im z_n\to\infty$ as $n\to\infty$.
\end{thm}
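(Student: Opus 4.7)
The plan is to derive an explicit closed form for $\stern{t_n}\lm$ and then locate a suitable complex zero by perturbing around the real zeros of a classical Fibonacci-type polynomial. Iterating the tail recursion \eqref{eqn:recur-tail} starting from $t_n = (4^{n+1}+(-2)^n+1)/3$ shows that
\[
t_n \,=\, [[\underbrace{1,\ldots,1}_{n},\ 2,\ \underbrace{1,\ldots,1}_{n-3},\ 2]]\qquad \text{for } n\geq 3.
\]
I would then apply Theorem \ref{thm:stern-poly-recursion} repeatedly along this bracket. Introducing the polynomials $Q_k(\lm)$ defined by $Q_0 = Q_1 = 1$ and $Q_k = Q_{k-1} + \lm Q_{k-2}$, a direct computation combined with the Cassini-type identity $Q_{n+1}Q_{n-1} - Q_n^2 = (-1)^{n+1}\lm^n$ (a short induction on the $Q_k$-recursion) produces the key formula
\[
\stern{t_n}\lm \,=\, (1+\lm)\,Q_n(\lm)^2 + (-1)^{n+1}\lm^n.
\]

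The polynomials $Q_n$ admit the Chebyshev parametrization $Q_n(\lm) = (2\cos\phi)^{-n}\sin((n+1)\phi)/\sin\phi$ under $\lm = -1/(4\cos^2\phi)$, from which the real zeros of $Q_n$ are $\lm_{n,k} = -1/(4\cos^2(\pi k/(n+1)))$ for $k=1,\ldots,\lfloor n/2\rfloor$. Taking $k_n := \lfloor n/2\rfloor$, $\phi_n^\ast := \pi k_n/(n+1)$, and $\lm_n^\ast := \lm_{n,k_n}$, one has $\phi_n^\ast \to \pi/2$ and $|\lm_n^\ast| \asymp n^2$ as $n\to\infty$. Near the simple zero $\lm_n^\ast$ of $Q_n$, the closed form expands as
\[
\stern{t_n}\lm \,=\, (1+\lm_n^\ast)\bigl(Q_n'(\lm_n^\ast)\bigr)^2(\lm-\lm_n^\ast)^2 + (-1)^{n+1}(\lm_n^\ast)^n + O\bigl((\lm-\lm_n^\ast)^3\bigr),
\]
and after substituting the Chebyshev expressions the leading-order balance simplifies to
\[
(\lm-\lm_n^\ast)^2 \,\approx\, \frac{\tan^4 \phi_n^\ast}{(4\cos^2\phi_n^\ast - 1)(n+1)^2}.
\]
Since $\phi_n^\ast > \pi/3$, the factor $4\cos^2\phi_n^\ast - 1$ is negative, so the approximate zero lies at $\lm_n^\ast \pm i\delta_n$ with a real $\delta_n \asymp n$; hence the corresponding zero $z_n$ of $\stern{t_n}\lm$ satisfies $|\Im z_n| \asymp n \to \infty$.

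The main obstacle is rigorously upgrading this formal second-order balance to the existence of an actual zero, since both $|\lm_n^\ast|$ and the perturbation size grow with $n$. I would handle this via Rouch\'e's theorem on a disk of radius $\asymp n$ centered at the predicted location $\lm_n^\ast + i\delta_n$: the cubic remainder $O((\lm-\lm_n^\ast)^3)$ and the influence of the nearest neighboring zero $\lm_{n,k_n-1}$ of $Q_n$ remain negligible because the interzero spacing $|\lm_{n,k_n}-\lm_{n,k_n-1}| \asymp n^2$ comfortably dominates the $O(n)$ perturbation scale. The symmetry $\stern{t_n}(\overline{\lm}) = \overline{\stern{t_n}(\lm)}$ then furnishes a conjugate pair of zeros, completing the proof.
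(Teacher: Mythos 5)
The closed form $\stern{t_n}\lm = (1+\lm)Q_n(\lm)^2 + (-1)^{n+1}\lm^n$ is correct, as is the Chebyshev parametrization and the final formula $(\lm-\lm_n^\ast)^2\approx\tan^4\phi_n^\ast\big/\bigl((4\cos^2\phi_n^\ast-1)(n+1)^2\bigr)$, which indeed gives an imaginary offset $\delta_n\asymp n$. The overall strategy --- closed form, then Rouch\'e --- also matches the paper (your Chebyshev/characteristic-root parametrization is the same thing as the paper's ``principal square root'' form of $Q_n$ in terms of $\sqrt{1+4\lm}$). But the proof as written has a genuine gap at the Taylor-remainder step, and the proposed Rouch\'e argument would not close it.

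The claim that the remainder is $O\bigl((\lm-\lm_n^\ast)^3\bigr)$ is false, for two compounding reasons. First, $(-1)^{n+1}\lm^n$ contributes a \emph{linear} term $(-1)^{n+1}n(\lm_n^\ast)^{n-1}(\lm-\lm_n^\ast)$, and at the scale $|\lm-\lm_n^\ast|\asymp n$ (with $|\lm_n^\ast|\asymp n^2$) this term is \emph{comparable} to the constant and quadratic terms: one checks $|L|^2/(4|A||B|)\to 4$ where $A=(1+\lm_n^\ast)Q_n'(\lm_n^\ast)^2$, $L=(-1)^{n+1}n(\lm_n^\ast)^{n-1}$, $B=(-1)^{n+1}(\lm_n^\ast)^n$. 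If you honestly solve the quadratic $A x^2+Lx+B=0$ with real coefficients you get a \emph{positive} discriminant and hence \emph{real} roots, contradicting the conclusion you want. Second, and more fundamentally, on a disk of radius $\asymp n$ around $\lm_n^\ast$ the factors $Q_n(\lm)^2$ and $\lm^n$ each acquire an $O(1)$ complex phase (approximately $e^{-4i}$ in both cases, coming respectively from the product $\prod_{k\ne k_n}(\lm-\lm_{n,k})/(\lm_n^\ast-\lm_{n,k})$ and from $(1+(\lm-\lm_n^\ast)/\lm_n^\ast)^n$), so no finite-order $\lm$-space Taylor expansion is uniformly accurate there. These two phases happen to cancel in the equation $(1+\lm)Q_n^2=(-1)^n\lm^n$, which is why your formula for $(\lm-\lm_n^\ast)^2$ gives the right magnitude anyway --- but the quadratic model $H(\lm)=A(\lm-\lm_n^\ast)^2+B$ does \emph{not} dominate $\stern{t_n}\lm-H(\lm)$ on your proposed circle, so Rouch\'e fails there.

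The fix is to carry out the asymptotics and Rouch\'e in the $\phi$-variable rather than the $\lm$-variable. Under $\lm=-\tfrac14\sec^2\phi$, the common factor $(2\cos\phi)^{-2n}$ (which is exactly the source of the $e^{\pm 4i}$ phases) cancels from both sides, and the zero condition becomes
\[
\bigl(4\cos^2\phi-1\bigr)\sin^2\bigl((n+1)\phi\bigr)=\sin^2(2\phi).
\]
Near $\phi_n^\ast=\pi k_n/(n+1)$ this reduces to $(4\cos^2\phi_n^\ast-1)(n+1)^2\zeta^2\approx 4\cos^2\phi_n^\ast$ with $\zeta=\phi-\phi_n^\ast\asymp n^{-2}$, a genuinely small perturbation in which $\cos 2\phi$, $\sin 2\phi$, $\sin((n+1)\phi)$ are all slowly varying. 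A Rouch\'e comparison in a disk of radius $\asymp n^{-2}$ in $\phi$ then goes through cleanly, and transporting back via $\lm-\lm_n^\ast\approx-\zeta/(2\cos^3\phi_n^\ast)$ yields $\Im z_n\asymp n$. You should reroute the rigorous step through this change of variables.
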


The proof begins by finding a closed form for $\stern{t_n}\lm$ (involving the principal square root in $\mathbb C$), then uses Rouch{\'e}'s Theorem along with careful asymptotics to show there exist roots with arbitrarily large imaginary part.  The interested reader may consult the author's doctoral thesis \cite{Altizio2025} for the details.  The author believes $\Re z_n \asymp (\Im z_n)^2$, but has not worked out the details.

\bibliographystyle{abbrv}
\bibliography{stern-poly-bib}

\end{document}